\newtheorem{theorem}{Theorem}[section]
\newtheorem{lemma}[theorem]{Lemma}
\newtheorem{proposition}[theorem]{Proposition}
\theoremstyle{definition}
\newtheorem{definition}[theorem]{Definition}
\newtheorem{remark}[theorem]{Remark}
\newcommand{\wY}{\widetilde{Y}}
\newcommand{\Ss}{\mathbb{S}}
\newcommand{\PP}{\mathbb{P}}
\newcommand{\PPP}{\mathbf{P}}
\newcommand{\Hh}{\mathcal{H}}
\newcommand{\HH}{\mathbb{H}}
\newcommand{\D}{\mathrm{d}}
\newcommand{\UU}{\mathbb{U}}
\newcommand{\R}{\mathbb{R}}
\newcommand{\Ff}{\mathcal{F}}
\newcommand{\rrho}{\overline{\rho}}
\numberwithin{equation}{section}
\begin{document}
\title{On the probability of hitting the boundary for Brownian motions on the SABR plane}

\author{Archil Gulisashvili}
\address{Department of Mathematics, Ohio University}
\email{gulisash@ohio.edu}

\author{Blanka Horvath}
\address{Department of Mathematics, Imperial College London}
\email{b.horvath@imperial.ac.uk}

\author{Antoine Jacquier}
\address{Department of Mathematics, Imperial College London}
\email{a.jacquier@imperial.ac.uk}
\date{\today}

\keywords{SABR model, hitting times, Brownian motion on a manifold}
\subjclass[2010]{58J65, 60J60}
\thanks{
BH acknowledges financial support from the SNF Early Postdoc Mobility Grant 165248.
AJ acknowledges financial support from the EPSRC First Grant EP/M008436/1.}
\maketitle

\begin{abstract}
Starting from the hyperbolic Brownian motion as a time-changed Brownian motion, 
we explore a set of probabilistic models--related to the SABR model in mathematical finance--which 
can be obtained by geometry-preserving transformations,
and show how to translate the properties of the hyperbolic Brownian motion (density, probability mass, drift) 
to each particular model.
Our main result is an explicit expression for the probability of any of these models 
hitting the boundary of their domains, 
the proof of which relies on the properties of the aforementioned transformations 
as well as time-change methods.
\end{abstract}

\section{Introduction}
Stochastic analysis on manifolds is a vibrant and well-studied field dating back to the seminal work 
of Varadhan~\cite{Varadhan}, followed by Elworthy~\cite{Elworthy}, 
Hsu~\cite{Hsu}, Stroock~\cite{Stroock}, Grigoryan~\cite{Grigoryan}, Avramidi~\cite{Avramidi} and, 
in a financial context ~\cite{Armstrong, GatheralHsu, HLW, Henry-Labordere};
of particular importance in these works is Brownian motion on a Riemannian manifold\footnote{Recall that a Markov process $X$ with state space $M$ is a Brownian motion on a Riemannian manifold~$(M,g)$ if 
its law solves the martingale problem corresponding to the Laplace-Beltrami operator of~$(M,g)$;
see~\cite{Grigoryan, Hsu}.}.
The underlying manifold here is the state space of the process, 
which is in most cases a complete open manifold. 
This is not the case, for example for the following process:
\begin{equation}\label{eq:SABRDrift}
\begin{array}{rlrl}
\D X_t & = \displaystyle Y_t X_t^{\beta}\D W_t + \frac{\beta}{2} Y_t^2 X_t^{2\beta-1}\D t, 
\qquad & X_0 & = x_0> 0,\\
\D Y_t & = \nu Y_t \D Z_t, \qquad & Y_0 & = y_0>0,\\
\D \langle Z,W\rangle_t & = \rho\,\D t,
\end{array}
\end{equation}
where $\nu>0$, $\rho\in (-1,1)$, $\beta \in [0,1)$,
and $W$ and $Z$ are two correlated Brownian motions on a filtered probability space
$(\Omega, \Ff, (\Ff_t)_{t\geq 0}, \PP)$.
The case $\beta=1$ is excluded of the analysis as it is a trivial case (see Remark~\ref{R:rem0}).
While in the case $\beta=0$, the natural state space is
$\Hh:=\R\times(0,\infty)$ both open and complete, 
the natural underlying space when $\beta>0$
is $\Hh_+:=[0,\infty)\times(0,\infty)$, a (non-complete) manifold with boundary 
$\{0\}\times(0,\infty)$. 
In these situations it is natural to wonder about the probability that 
the process on this state space never reaches the boundary. 
In the specific case $\beta=\rho=0$, $\nu=1$, the SDE~\eqref{eq:SABRDrift} 
describes the dynamics of Brownian motion on hyperbolic plane.
This (hyperbolic) Brownian motion is particularly tractable, and its density is known in closed form. Therefore, it is a good starting point for the study of the law and the large-time behaviour of processes of the form \eqref{eq:SABRDrift}.
Indeed, restricting hyperbolic Brownian motion to $\Hh_+$ with the addition of Dirichlet boundary conditions along the ray 
$\{0\}\times(0,\infty)$
makes this process
suitable for the framework of Hobson~\cite[Theorem 4.2]{Hobson}, 
who studies the large-time behaviour of stochastic volatility models via coupling and comparison methods. 
There, Hobson provides the following classification (and examples) of the large-time behaviour of sample paths 
of the~$X$ process for such models: 
(i) it can hit zero in finite time, (ii) it converges to a strictly positive limit, 
or~(iii) it is always positive, but converges to zero as time tends to infinity.
Note that these cases are not necessarily exclusive from one another, and (i) and (ii) can both happen with positive probability;
for a given model, however, it is in general difficult to estimate these probabilities precisely.
In this article we single out some processes for which these probabilities can be quantified. 
The hyperbolic Brownian motion, for example, exhibits such a non-trivial large time behaviour,
where both~(i) and~(ii) occur with strictly positive probabilities, 
for which we derive explicit expressions using time change techniques and properties of hitting times of Brownian motion. 
We further present transformations of the hyperbolic Brownian motion 
under which this large-time property remains valid,
and provide formulae for these probabilities;
the resulting processes turn out to be precisely of the form~\eqref{eq:SABRDrift}\footnote{
Up to a deterministic time change, $\nu$ can be taken equal to one, 
and we assume this without loss of generality.}.
One particular feature of~\eqref{eq:SABRDrift} is that the state space is not compact.
In geometry, the large-time behaviour of the heat kernel (and the corresponding semigroup) 
is well known in the compact case--via 
its infinite series representation (see for example~\cite{ChavelBook})--and 
several results have extended this to the non-compact case
(see for example~\cite{Chavel, Li, Pinchover1, Pinchover2}). The majority of the existing literature however focuses on the case where the state space is a complete manifold, and results for the case of manifolds with boundary are rare~\cite{Vasilievic}.
In probability, such results, 
known for continuous time Markov chains on finite state space (by Perron-Frobenius theorem), 
do not have a general formulation for infinite state space.

In this article, we display several tractable properties of the solution to~\eqref{eq:SABRDrift},
which we refer to as a Brownian motion on the SABR plane,
since it characterises a Brownian motion in a suitably chosen Riemannian manifold with boundary 
(the SABR plane cf.~\cite[Subsection 3.2]{HLW}), 
as emphasized in Lemma~\ref{Lem:varphi} below.
We analyse furthermore the effect of the parameters $\beta$ and $\rho$ on the large-time behaviour of the process~\eqref{eq:SABRDrift} by focussing on the cases where either one of these parameters (or both) is zero. Our analysis confirms that the large-time behaviour is independent of the order in which the parameters $\beta$ and $\rho$ were introduced (this follows from the commutativity of the diagram 
on Page~\pageref{diagram}, 
proved in Theorem~\ref{Th:MaintheoremSec2}). 
We also relate (whenever possible) the properties of this model to those of the SABR model
\begin{equation}\label{eq:SABRSDE}
\begin{array}{rlrl}
\D X_t & = \displaystyle Y_t X_t^{\beta}\D W_t, 
\qquad & X_0 & = x_0> 0,\\
\D Y_t & = \nu Y_t \D Z_t, \qquad & Y_0 & = y_0>0,\\
\D \langle Z,W\rangle_t & = \rho\,\D t,
\end{array}
\end{equation}
introduced in~\cite{ManagingSmileRisk, HLW}, and now widely used in financial markets.
Compared to the SABR model~\eqref{eq:SABRSDE}, 
the $X$-dynamics of~\eqref{eq:SABRDrift} include an additional drift term,
which appears in an expansion of the density only as a higher-order term perturbation correction~\cite{HLW}.
The behaviour of the drift in~\eqref{eq:SABRDrift}
is significantly different when $\beta \in (0,1/2)$ and $\beta\in(1/2, 1)$: 
in the former case the drift explodes when~$X$ approaches zero, while it vanishes in the latter case;
when $\beta=1/2$, the drift does not depend on~$X$ at all.
Interestingly however, the large-time behaviour remains invariant 
under some transformations affecting~$\beta$, 
while local properties (such as the density) can be translated from one case to another, 
reflecting the `phase transition' occurring in the above three cases. 
As observed in~\cite{DoeringHorvathTeichmann}, the constraints $\rho=0$ or $\beta=0$ 
are the only parameter configurations where certain advantageous regularity properties of~\eqref{eq:SABRSDE} are valid. 
In fact these are the only cases for which~\eqref{eq:SABRSDE} can be written 
as a Brownian motion on some weighted\footnote{See~\cite[Definition 3.17]{Grigoryan} 
for a precise definition of a weighted manifold.} manifold.
Note furthermore, that in the  $\beta=0$ case, the drift in~\eqref{eq:SABRDrift} vanishes and
the SDEs~\eqref{eq:SABRDrift} and~\eqref{eq:SABRSDE} coincide for all values of~$\rho$.
According to~\cite{BallandTran} and~\cite{HLW}, in the prevalence of low interest rates, 
the choice $\beta=0$ is rather common practice on interest rate desks, 
and, in this case, \eqref{eq:SABRSDE} is usually referred to as the `normal SABR' model.

Case~(i) in Hobson's classification coincides with the probability 
\begin{equation}\label{eq:PMass}
\PPP := \PP(X_t=0\text{ for some }t \in (0,\infty)),
\end{equation}
and our main result (Theorem~\ref{thm:Main}) is an exact expression for this probability as
$$
\PPP=\int_0^{\infty}\D t\int_0^t f(s,t)\D s,
$$
where the function~$f$ is available in closed form (as an infinite series).
In the case $\beta=\rho=0$, the function~$f$ admits the simplified formulation
($\mathrm{I}_{n}$ denoting the Bessel function of the first kind)
$$
f(s, t) = \frac{2\exp\left(-\frac{(x_0^2+y_0^2)(t+s)}{4st}\right)}{\pi (t-s)\sqrt{st}}
\sum_{n=1}^{\infty}n\sin\left[2n\left(\frac{\pi}{2} - \arctan\left(\frac{y_0}{x_0}\right)\right)\right]\mathrm{I}_{n}
\left(\frac{(x_0^2+y_0^2)(t-s)}{4st}\right).
$$
Similar probabilities (yet not this one in particular), of hitting some boundary, or a ball around it, 
have been studied for the hyperbolic Brownian motion in~\cite{Byczkowski, Macci, GriCoste, Lao, Lalley}

In Section~\ref{sec:Mappings}, we analyse the dynamics of~\eqref{eq:SABRDrift} under different parameter configurations,
and propose several space transformations to translate properties of one model configuration to the other.
In Section~\ref{SS:ltb}, we use these maps to derive an exact formula for~$\PPP$ for general parameter values.
We recall in Appendix~\ref{app:Geometry} some notions on the heat equation on manifolds, 
needed along the paper.
\vspace{3mm}

\textbf{Notations:} 
For a given real-valued stochastic process $X$ (with continuous paths) and a real number $z$, 
we denote by $\tau_z^X:=\inf\{t\geq 0: X_t=z\}$ the first hitting time of $X$ at level~$z$.
For convenience, we shall use the (now fairly standard) notation 
$\rrho:=\sqrt{1-\rho^2}$. For two functions $f$ and $g$, we shall write $f(z)\sim g(z)$ 
as~$z$ tends to zero
whenever $\lim\limits_{z\to 0} f(z)/g(z) = 1$.


\section{SABR geometry and geometry preserving mappings}\label{sec:Mappings}
We first exhibit a set of mappings allowing to translate the properties of one model configuration to another.
Let $\Hh:=\R\times (0,\infty)$ and $\Hh_+:= (0,\infty)^2$, and introduce the following pairs of spaces together with their metrics:
$$
\HH:=(\Hh, h),\quad
\HH_+:=(\Hh_+, h),\quad
\UU:=(\Hh_+, u),\quad
\Ss:=(\Hh_+, g),\quad
\Ss^0:=(\Hh, g^0),\quad
\Ss^0_+:=(\Hh_+, g^0),
$$
where the Riemannian metrics on their respective spaces are given by
\begin{equation*}
\begin{array}{rll}
h(\widetilde{x}, \widetilde{y})
 & \displaystyle = \frac{\D\widetilde{x}^2 + \D\widetilde{y}^2}{\widetilde{y}^2},
 & \displaystyle (\widetilde{x}, \widetilde{y}) \in \Hh,\\
g(x,y)
 & \displaystyle = \frac{1}{\rrho^2}\left(\frac{\D x^2}{y^2x^{2\beta} } - \frac{2 \rho \D x\D y}{y^2 x^{\beta}}
 + \frac{\D y^2}{y^2}\right),
 & \displaystyle (x,y) \in \Hh_+,\\
g^{0}(\widehat x,\widehat y)
 & \displaystyle = \frac{1}{\rrho^2}\left(\frac{\D \widehat x^2}{\widehat y^2}
  - \frac{2 \rho \D \widehat x\D \widehat y}{\widehat y^2}
 + \frac{\D \widehat y^2}{\widehat y^2}\right),
 & \displaystyle (\widehat{x}, \widehat{y}) \in \Hh,\\
u(\bar{x},\bar{y})
 & \displaystyle = \frac{1}{\bar{y}^2}\left(\frac{\D\bar{x}^2}{\bar{x}^{2\beta}}  + \D\bar{y}^2\right)
 & \displaystyle (\bar x,\bar y) \in \Hh_+.
\end{array}
\end{equation*}
Clearly, $\UU$ corresponds to the uncorrelated ($\rho=0$) model,
while $\Ss^0$ is the general SABR plane with $\beta=0$;
$\HH$ represents the classical Poincar\'e plane with its associated Riemannian metric~\cite[Section 3.9]{Grigoryan}, 
and~$\Ss$ the general SABR plane, generated by~\eqref{eq:SABRDrift}.
The following diagram summarises the different relations between the mappings and the spaces
(we also include the corresponding coordinate notations):

\centerline{\xymatrix{
& \quad\stackrel{_{(\widetilde{x},\widetilde{y})}}{\HH}\quad
\ar@/_1pc/[rd]|{\bar \chi}
 &
\\
\quad\stackrel{_{(\widehat x,\widehat y)}}{\Ss^{0}}\quad
\ar@/^1pc/[ur]|-{\widetilde \phi_{0}}
 \ar@/^1pc/[dr]|-{\chi}
 & 
 & \quad\stackrel{_{(\bar{x},\bar{y})}}{\mathbb{U}}\quad
 \ar@/_1pc/[ul]|-{\widetilde\varphi^{0}}
\\
& \quad\stackrel{_{(x,y)}}{\Ss}\quad
 \ar@/_1pc/[ur]|-{\bar{\phi_{0}}
}
 \ar@/^1pc/[ul]|-{\widehat \varphi^{0}}
 \ar[uu]|-{\widetilde \phi_{0}^{0}}
 & 
}\label{diagram}}
Regarding the mapping notations, subscripts~$_0$ are related to the correlation parameter 
(for example, the parameter $\rho$ vanishes by the action of~$\bar\phi_0$),
whereas superscripts~$^0$ indicate that the parameter~$\beta$ vanishes;
the map~$\chi$ reintroduces the parameter~$\beta$.
The mappings between these spaces are defined as follows:
\begin{equation}\label{eq:Isometries}
\begin{array}{lrlll}
\widetilde \phi_{0}^{0}: & \Ss \ni (x,y) & \longmapsto \left(\widetilde{x},\widetilde{y}\right)
 :=
\displaystyle \left(\frac{x^{1-\beta}}{\rrho(1-\beta)} - \frac{\rho y}{\rrho}, y \right)
 & \in \HH, \\
\widehat \varphi^{0}: & \Ss \ni (x,y) & \longmapsto \left(\widehat{x},\widehat{y}\right)
 :=
\displaystyle \left( \frac{x^{1-\beta}}{1-\beta}, y \right)
 & \in \Ss^{0},
\\
\bar{\phi}_{0} : & \Ss \ni (x,y) & \longmapsto \displaystyle (\bar{x},\bar{y}) := \left( (1-\beta)^{\frac{1}{1-\beta}}\left(\frac{ x^{1-\beta}}{\rrho(1-\beta)}-\frac{\rho y}{\rrho}\right)^{\frac{1}{1-\beta}}, y\right)
 & \in \UU, 
\quad \rho\leq 0\\
\widetilde \phi_{0}: & \Ss^{0}_+ \ni (\widehat x,\widehat y) & \longmapsto \left(\widetilde{x},\widetilde{y}\right)
 :=
\displaystyle \left( \frac{ \widehat x - \rho \widehat y}{\rrho}, \widehat y \right) & \in \HH, \\
\chi: & \Ss^{0}_+ \ni \left(\widehat{x},\widehat{y}\right) & \longmapsto \left(x,y\right)
 :=
\displaystyle \left( (1-\beta)^{\frac{1}{1-\beta}} \widehat{x}^{\frac{1}{1-\beta}}, \widehat{y} \right)
 & \in \Ss,\\
\bar \chi: & \HH_+ \ni \left(\widetilde{x},\widetilde{y}\right) & \longmapsto \displaystyle \left(\bar{x},\bar{y}\right)
 :=
\displaystyle \left( (1-\beta)^{\frac{1}{1-\beta}}\widetilde{x}^{\frac{1}{1-\beta}} , \widetilde{y} \right)
 & \in \UU,\\
\widetilde \varphi^{0}: & \UU \ni (\bar x,\bar y) & \longmapsto \displaystyle\left(\widetilde{x},\widetilde{y}\right)
 :=
\displaystyle \left( \frac{\bar{x}^{1-\beta}}{1-\beta}, \bar y \right)
 &  \in \HH_+, \\
\end{array}
\end{equation}
From now on, if not indicated otherwise, we restrict the domains of the above maps 
to the first quadrant~$\Hh_+$, 
which--when considering compositions--impose restrictions on the parameters 
in order to ensure that images also belong to this set 
(for example the restriction~$\rho \in (-1,0]$ needs to be imposed for the composition~$\widetilde\phi_0^0\circ\chi$).
While the map~$\widetilde\phi_0$ can be extended to the whole upper halfplane~$\Hh$, 
thus describing an asset with negative value, 
the maps~$\widetilde\varphi^0$, $\widetilde\phi_0^0$, $\chi$ and~$\bar\chi$ 
cannot be defined in the real plane.
They can be extended to the line $\{(x,y) \in \Hh: x=0\}$ though, and are non-differentiable there.
The following theorem gathers the properties of all these maps:

\begin{theorem}\label{Th:MaintheoremSec2}
The diagram is commutative and all the mappings in~\eqref{eq:Isometries} are local isometries on their respective spaces:
\begin{itemize}
\item the maps $\widehat\varphi^{0}$ and $\chi$ (resp. $\widetilde\varphi^{0}$ and $\bar\chi$)
 on~$\Hh_+$ are onto and inverse to one another;
\item the compositions $\bar \phi_0 \circ \widetilde \varphi^0$ and $\widehat \varphi^{0}\circ \widetilde \phi_{0}$ 
coincide with $\widetilde \phi_{0}^{0}$;
\item the equalities $\chi\circ\widetilde \phi_0^0=\widetilde \phi_0$ and $\widetilde \phi_0^0\circ \bar \chi=\bar{\phi}_{0}$ hold, 
and the latter is well defined for $\rho \in (-1,0]$;
\item the map $\widehat \varphi^{0}$ (resp. $\widetilde \varphi^0$) transforms the Brownian motion 
on~$(\Ss,g)$ (resp.~$(\UU,u)$) into the SABR model~\eqref{eq:SABRSDE} with $\beta=0$ 
(resp. $\rho=\beta=0$), which in turn is transformed back to Brownian motion on its original spaces 
by the map~$\chi$ (resp.~$\bar{\chi}$);
\item the maps $\bar \phi_{0}$ (resp. the extension of $\widetilde \phi_{0}$) transforms the Brownian motion on~$(\Ss,g)$ (resp. $(\Ss^0,g^0)$) into its uncorrelated version on~$(\UU,u)$ (resp. $(\HH,h)$).
\end{itemize}
\end{theorem}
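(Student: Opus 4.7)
My plan is to handle the three bullet-point clusters in turn: first the isometry and commutativity claims (which are pure coordinate calculus), then the Brownian motion identifications (which follow from isometry invariance of the Laplace--Beltrami operator together with Itô's formula applied to \eqref{eq:SABRDrift}--\eqref{eq:SABRSDE}).

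For the \emph{local isometry} claims I would simply compute the pullback of each target metric under the stated Jacobian and match it term by term with the source metric. For instance, $\widehat\varphi^{0}$ sends $(x,y)\mapsto(x^{1-\beta}/(1-\beta),y)$, so $\D\widehat x=x^{-\beta}\D x$ and $\D\widehat y=\D y$, and substituting into $g^{0}$ yields exactly $g$; analogously $\widetilde\varphi^{0*}h=u$, $\bar\chi^{*}u=h$ on $\Hh_{+}$, $\chi^{*}g=g^{0}$, and for $\widetilde\phi_{0}$ the affine change $\widetilde x=(\widehat x-\rho\widehat y)/\rrho$, $\widetilde y=\widehat y$ produces $\D\widetilde x^{2}=\rrho^{-2}(\D\widehat x^{2}-2\rho\,\D\widehat x\D\widehat y+\rho^{2}\D\widehat y^{2})$ which combined with $\D\widetilde y^{2}=\D\widehat y^{2}$ converts $h$ into $g^{0}$. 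The inverse pairs $(\widehat\varphi^{0},\chi)$ and $(\widetilde\varphi^{0},\bar\chi)$ are literally power/inverse-power maps in the first coordinate with $y$ fixed, so the inversion statement is trivial on $\Hh_{+}$.

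The commutativity identities $\bar\phi_{0}\circ\widetilde\varphi^{0}=\widetilde\phi_{0}^{0}$, $\widehat\varphi^{0}\circ\widetilde\phi_{0}=\widetilde\phi_{0}^{0}$, $\chi\circ\widetilde\phi_{0}^{0}=\widetilde\phi_{0}$ and $\widetilde\phi_{0}^{0}\circ\bar\chi=\bar\phi_{0}$ follow by direct composition of the formulas in \eqref{eq:Isometries}: each side reduces to a known elementary function of $(x,y)$ (powers of $x$ and affine expressions in $y$), so one reads off equality. The constraint $\rho\in(-1,0]$ for $\widetilde\phi_{0}^{0}\circ\bar\chi=\bar\phi_{0}$ is needed precisely to ensure that the argument $\tfrac{x^{1-\beta}}{\rrho(1-\beta)}-\tfrac{\rho y}{\rrho}$ to the fractional power $1/(1-\beta)$ stays non-negative, so it enters exactly where the formula requires.

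For the Brownian motion statements I would invoke the standard fact that a smooth local isometry pushes Brownian motion forward to Brownian motion (since it intertwines Laplace--Beltrami operators). Hence it is enough to check \emph{one} model in each orbit: I would verify that the uncorrelated, $\beta=0$ SDE on $\UU$ is Brownian motion on $(\UU,u)$ by computing $\tfrac{1}{2}\Delta_{u}$ via $\Delta_{u}f=\frac{1}{\sqrt{|u|}}\partial_{i}(\sqrt{|u|}u^{ij}\partial_{j}f)$ and reading off the drift and diffusion, and analogously for hyperbolic Brownian motion on $(\HH,h)$. Applying the isometry $\chi$ (resp.\ $\bar\chi$, $\widehat\varphi^{0}$, $\widetilde\varphi^{0}$) to those generator-level statements, or equivalently applying Itô's formula to the coordinate change on the SDE, then produces \eqref{eq:SABRDrift} or the stated normalisations of \eqref{eq:SABRSDE}; the Itô correction term is exactly what supplies the $\tfrac{\beta}{2}Y^{2}X^{2\beta-1}$ drift in \eqref{eq:SABRDrift}, which is the reconciling point between the SABR SDE \eqref{eq:SABRSDE} and Brownian motion on $(\Ss,g)$.

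The one delicate point I expect is the non-differentiability of $\chi$ and $\bar\chi$ at $\{x=0\}$: all isometry statements and Itô computations must be restricted to the open manifold $\Hh_{+}^{\circ}=(0,\infty)^{2}$, and one must justify that the Brownian motion identifications transfer only up to the first hitting time of the boundary $\{0\}\times(0,\infty)$. This is not a serious obstacle here, because the maps extend continuously to the boundary and the boundary is precisely where the hitting probability $\PPP$ in \eqref{eq:PMass} is measured; but it should be flagged explicitly, so that the subsequent use of the diagram in Section~\ref{SS:ltb} to transport the hitting probability between models is legitimate.
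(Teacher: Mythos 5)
Your proposal is correct and follows essentially the same route as the paper: the first three bullet points by direct composition and pullback computations with the explicit Jacobians, and the Brownian-motion identifications by matching the SDE generators with the Laplace--Beltrami operators and applying It\^o's formula, with the intertwining property $\Delta_{g_2}(\phi_*f)=\phi_*(\Delta_{g_1}f)$ playing the role you assign to it (the paper's Lemma~\ref{lem:CommLaplBeltr}, used in Lemmas~\ref{Lem:varphi}--\ref{Lem:SABRIsoU}). Your closing caveat about non-differentiability of $\chi$, $\bar\chi$ (and the related maps) along $\{0\}\times(0,\infty)$, and the restriction to the open quadrant with absorption at the boundary, is exactly the point the paper itself flags after Lemma~\ref{lem:KgKu}.
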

\begin{proof}
The first three items follow from simple computations;
the remaining statements follow from Lemmas~\ref{Lem:varphi}, \ref{CorrHypBM}, \ref{Lem:SABRIsoS0}
and~\ref{Lem:SABRIsoU} below.
\end{proof}

\begin{remark}
The map~$\widetilde \phi_0^0$ was first considered in~\cite{HLW}, 
and is a local isometry mapping a Brownian motion on~$(\Ss,g)$ 
to a Brownian motion on the hyperbolic half-plane~$(\HH,h)$.
The refined analysis of Theorem~\ref{Th:MaintheoremSec2} confirms that we can treat the effect of the parameters~$\rho$ and~$\beta$ separately. 
Disassembling the influence of the parameters~$\rho$ and~$\beta$ 
further allows us to draw consequences on the large-time behaviour of these processes 
(see Remark~\ref{RemarktoSection3} and Section~\ref{SS:ltb} below). 
\end{remark}
As a first step we investigate the maps $\widehat \varphi^{0}, \widetilde \varphi^{0},$ 
which annul~$\beta$ and~$\chi, \overline{\chi}$, which reintroduce~$\beta$.
\begin{lemma}\label{Lem:varphi}
The solution~$(X,Y)$ to~\eqref{eq:SABRDrift} coincides in law 
with a Brownian motion on~$(\Ss,g)$. 
The process~$(\widehat{X},\widehat{Y})$ defined pathwise by
\begin{equation}\label{E:inad}
(\widehat{X}_t,\widehat{Y}_t)
 := \widehat \varphi^{0}(X_t, Y_t)
 = \left(\frac{X_t^{1-\beta}}{1-\beta},Y_t\right),\quad \text{for all }t\geq 0.
\end{equation}
is a SABR process~\eqref{eq:SABRSDE} with $\beta=0$,
which coincides in law with a Brownian motion
on the correlated hyperbolic plane~$(\Ss^0,g^0)$. 
\end{lemma}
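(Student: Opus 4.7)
The plan is to verify the three assertions of the lemma in turn: first, identify the generator of \eqref{eq:SABRDrift} with $\tfrac{1}{2}\Delta_g$; then compute the SDE satisfied by $(\widehat X,\widehat Y)$ via It\^o's formula; and finally identify the generator of $(\widehat X,\widehat Y)$ with $\tfrac{1}{2}\Delta_{g^0}$. In view of the footnote characterisation of Brownian motion on a Riemannian manifold, the required coincidences in law then follow from uniqueness of the solution to the corresponding martingale problem.

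For the first step I would write $g$ in matrix form,
$$
g_{ij}(x,y)=\frac{1}{\rrho^2 y^2}\begin{pmatrix} x^{-2\beta} & -\rho\, x^{-\beta}\\ -\rho\, x^{-\beta} & 1\end{pmatrix},
$$
compute $|g|=\rrho^{-2}y^{-4}x^{-2\beta}$, and invert to obtain $g^{11}=y^2 x^{2\beta}$, $g^{12}=g^{21}=\rho y^2 x^{\beta}$, $g^{22}=y^2$. Expanding
$$
\Delta_g f=\frac{1}{\sqrt{|g|}}\,\partial_i\!\left(\sqrt{|g|}\,g^{ij}\partial_j f\right),
$$
the second-order part of $\tfrac{1}{2}\Delta_g$ equals $\tfrac12 y^2 x^{2\beta}\partial_{xx}f+\rho y^2 x^\beta \partial_{xy}f+\tfrac12 y^2\partial_{yy}f$, matching the diffusion coefficients of \eqref{eq:SABRDrift} exactly, while the first-order terms reduce to $\tfrac{\beta}{2}y^2 x^{2\beta-1}\partial_x f$ with no $\partial_y$-contribution; this matches the drift of \eqref{eq:SABRDrift} on the nose.

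For the second step, It\^o's formula applied to $x\mapsto x^{1-\beta}/(1-\beta)$ gives
$$
\D\widehat X_t=X_t^{-\beta}\,\D X_t-\tfrac{\beta}{2}X_t^{-\beta-1}\,\D\langle X\rangle_t,
$$
and substituting the dynamics of $X$ together with $\D\langle X\rangle_t=Y_t^2 X_t^{2\beta}\,\D t$ produces an exact cancellation of the drift, leaving $\D\widehat X_t=Y_t\,\D W_t=\widehat Y_t\,\D W_t$; combined with $\D\widehat Y_t=\widehat Y_t\,\D Z_t$ and unchanged correlation $\rho$, this is the SABR SDE \eqref{eq:SABRSDE} with $\beta=0$. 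For the third step I would repeat the Laplace-Beltrami computation for the simpler metric $g^0$: here $\sqrt{|g^0|}=1/(\rrho\widehat y^2)$ and $g^{0,ij}=\widehat y^2 M^{ij}$ with constant matrix $M=\begin{pmatrix}1 & \rho\\ \rho & 1\end{pmatrix}$, so $\sqrt{|g^0|}\,g^{0,ij}$ is constant in $(\widehat x,\widehat y)$ and all first-order terms of $\Delta_{g^0}$ vanish; the resulting $\tfrac12\Delta_{g^0}f = \tfrac12\widehat y^2 f_{\widehat x\widehat x}+\rho\widehat y^2 f_{\widehat x \widehat y}+\tfrac12\widehat y^2 f_{\widehat y\widehat y}$ coincides with the generator of $(\widehat X,\widehat Y)$ just derived.

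The main (and essentially only) obstacle is the bookkeeping of the cross terms produced by $\rho\neq 0$ in the metric inversion and in the differentiation under $\Delta_g$, together with the verification that the first-order contribution of $\Delta_g$ reproduces precisely the $\tfrac{\beta}{2}Y^2 X^{2\beta-1}$ drift rather than something off by a constant. Once this computation is carried out, the cancellation in the It\^o step and the second Laplace-Beltrami matching are straightforward, and all three claims of the lemma follow by uniqueness of the martingale problem.
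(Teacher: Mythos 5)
Your proposal is correct and takes essentially the same route as the paper's proof: identify the generator of~\eqref{eq:SABRDrift} with $\tfrac{1}{2}\Delta_g$ (your explicit metric inversion reproduces exactly the expansion recorded in~\eqref{eq:LaplaceBeltrSabr}), apply It\^o's formula to $x\mapsto x^{1-\beta}/(1-\beta)$ to obtain~\eqref{eq:SABRdynamicsWithStratDrift2}, and match the generator of the transformed process with $\tfrac{1}{2}\Delta_{g^0}$. You merely spell out the Laplace--Beltrami computations and the martingale-problem identification in more detail than the paper, which cites the appendix formulae instead.
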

\begin{remark}\label{RemarktoSection3}
This map $\widehat \varphi^{0}:\Ss\to\Ss^{0}$ will be essential in our study of the long-time behaviour of the process~$X$ 
in Section~\ref{SS:ltb}. 
Indeed, since $\beta \in [0,1)$, we have, for the probability defined in~\eqref{eq:PMass},
\begin{equation}\label{E:forsome}
\PPP
 := \PP(X_t=0\text{ for some }t \in (0,\infty))
 = \PP(\widehat{X}_t=0\text{ for some }t \in (0,\infty)).
\end{equation}
\end{remark}
\begin{proof}
The statement that~\eqref{eq:SABRDrift} has the same law as Brownian motion on~$(\Ss,g)$ 
follows from the computation of the infinitesimal generator of~\eqref{eq:SABRDrift}, 
which coincides with the Laplace-Beltrami operator~$\frac{1}{2}\Delta_g$ on a manifold 
with metric tensor~$g(x,y)$ (see~\eqref{eq:LaplaceBeltrSabr} and~\eqref{eq:GeneratorSabr}).
The second statement is an application of It\^o's formula, 
which transforms the system~\eqref{eq:SABRDrift} into 
\begin{equation}\label{eq:SABRdynamicsWithStratDrift2}
\begin{array}{rll}
\D \widehat{X}_t & := \widehat Y_t \D W_t, \qquad
 &\displaystyle  \widehat{X}_0 = \widehat{x}_0 : = x_0^{1-\beta} / (1-\beta),\\
\D \widehat Y_t & = \nu \widehat Y_t \D Z_t, \qquad & \widehat Y_0 = \widehat y_0,\\
\D \langle W,Z\rangle_t & = \rho \D t,
\end{array}
\end{equation}
which is identical to~\eqref{eq:SABRSDE} with $\beta=0$, and $\rho\in (-1,1)$. 
Its generator coincides with the Laplace-Beltrami operator~$\frac{1}{2}\Delta_{g^0}$ 
of the respective manifold, which yields the last statement. 
\end{proof}
\begin{lemma}\label{Lem:SABRIsoS0}
The map~$\chi$ (resp.~$\bar{\chi}$) is a local isometry between~$(\Ss^{0}_+,g^0)$ and~$(\Ss,g)$ 
(resp.~$(\HH_+,h)$ and~$(\UU,u)$) and transforms the Brownian motion 
on the hyperbolic plane~$(\Ss^{0}_+,g^0)$ (resp.~$(\HH_+,h)$), 
whose dynamics are described by~\eqref{eq:SABRdynamicsWithStratDrift2}, 
into a Brownian motion on the general SABR plane~$(\Ss,g)$ (resp.~$(\UU,u)$), 
satisfying~\eqref{eq:SABRDrift}.
\end{lemma}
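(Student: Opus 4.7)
The plan is to handle $\chi$ in detail; the argument for $\bar{\chi}$ is strictly parallel, since both maps have the identical coordinate form $(u,v)\mapsto((1-\beta)^{1/(1-\beta)}u^{1/(1-\beta)},v)$ and differ only by which ambient metric one pulls back. Each of the two claims (local isometry and Brownian transport) will reduce to a single differential identity combined with It\^o's formula applied to a power function.

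First I would verify the local isometry by a direct coordinate pullback. Writing $(x,y)=\chi(\widehat x,\widehat y)=((1-\beta)^{1/(1-\beta)}\widehat x^{1/(1-\beta)},\widehat y)$, a one-line differentiation yields the key identity
\[
\D x = x^{\beta}\,\D\widehat x, \qquad \D y = \D\widehat y,
\]
which, substituted term by term into the expression for $g$, collapses the factors $x^{2\beta}$ and $x^{\beta}$ in the denominators and reproduces $g^{0}$ exactly. The analogous identity $\D\bar x=\bar x^{\beta}\,\D\widetilde x$ for $\bar{\chi}$ immediately turns the uncorrelated metric $u$ into the hyperbolic metric $h$, so that $\bar{\chi}^{*}u=h$.

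Next I would transport the Brownian motion via It\^o's formula. Starting from $(\widehat X,\widehat Y)$ satisfying \eqref{eq:SABRdynamicsWithStratDrift2}---which, by Lemma~\ref{Lem:varphi}, is a Brownian motion on $(\Ss^{0},g^{0})$---I define $(X,Y):=\chi(\widehat X,\widehat Y)$. Applying It\^o's formula to $F(\widehat x):=(1-\beta)^{1/(1-\beta)}\widehat x^{1/(1-\beta)}$ gives $F'(\widehat X_t)=X_t^{\beta}$ and $F''(\widehat X_t)=\beta X_t^{2\beta-1}$, and since $\D\langle\widehat X\rangle_t=\widehat Y_t^{2}\,\D t=Y_t^{2}\,\D t$, the It\^o correction $\tfrac{1}{2}F''(\widehat X_t)\,\D\langle\widehat X\rangle_t$ contributes precisely the drift $\tfrac{\beta}{2}X_t^{2\beta-1}Y_t^{2}\,\D t$ of \eqref{eq:SABRDrift}, while the martingale part matches because $F'(\widehat X_t)\widehat Y_t=X_t^{\beta}Y_t$. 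Since $\chi$ fixes the second coordinate, the $Y$-equation is unchanged. That the resulting system describes a Brownian motion on $(\Ss,g)$ then follows from the generator computation already carried out in Lemma~\ref{Lem:varphi}. The case of $\bar{\chi}$ is identical: starting from the $\rho=0$ specialisation of \eqref{eq:SABRdynamicsWithStratDrift2} on $(\HH_+,h)$, It\^o's formula produces the $\rho=0$ specialisation of \eqref{eq:SABRDrift}, which by the same generator argument is the SDE of a Brownian motion on $(\UU,u)$.

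The only delicate point---more a matter of care than a real obstacle---is that $F$ is non-smooth at $\widehat x=0$ when $\beta\in(0,1)$, so It\^o's formula must be applied up to the hitting time $\tau_{0}^{\widehat X}$. Since the statement is confined to the open half-plane $\Hh_+$, this restriction is harmless; the behaviour of $X$ at $\{x=0\}$ is precisely the subject of Section~\ref{SS:ltb}, where it is analysed through the inverse map $\widehat\varphi^{0}=\chi^{-1}$, as announced in Remark~\ref{RemarktoSection3}.
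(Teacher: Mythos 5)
Your proposal is correct and follows essentially the same route as the paper: the paper verifies the local isometry by computing the Jacobian $\nabla\chi(\widehat x,\widehat y)=\mathrm{diag}\bigl((1-\beta)^{\beta/(1-\beta)}\widehat x^{\beta/(1-\beta)},1\bigr)$ and checking the pullback relation, and then simply states that the transport of the Brownian motion follows by It\^o's lemma. Your write-up merely makes explicit the identity $\D x=x^{\beta}\D\widehat x$ and the It\^o computation (drift $\tfrac{\beta}{2}Y_t^{2}X_t^{2\beta-1}$, diffusion $Y_tX_t^{\beta}$) that the paper leaves implicit, together with the sensible caveat about non-differentiability at $\widehat x=0$, which the paper also handles by restricting to $\Hh_+$.
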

\begin{proof}
For a local isometry 
between $(\Ss^{0}_+,g^0)$ and $(\Ss,g)$ (resp. $(\HH,h)$ and $(\UU,u)$), it holds that for any $(\widehat x,\widehat y)\in \Ss^0$ (resp. $(\widetilde{x}, \widetilde{y})\in \HH$) there exists a small open neighbourhood $U_{(\widehat x,\widehat y)}\subset \Ss^0$ (resp. $U_{(\widetilde{x}, \widetilde{y})}\subset \HH$), 
such that the map $\chi_{|_{U_{(\widehat x,\widehat y)}}}$ 
(resp. $\bar \chi_{|_{U_{(\widetilde{x}, \widetilde{y})}}}$) is an isometry onto its image;
in particular it satisfies the pullback relation
(pullback notations and definitions are explained in Appendix~\ref{app:Geometry})
$$
\left(\chi_* g\right)(x, y)
 = \chi_* \left( \frac{\D x^2}{\rrho x^{2\beta}y^2}+\frac{2 \rho  \D x \D y}{\rrho x^{\beta}y^2}+\frac{\D y^2}{\rrho y^2} \right)
 = \frac{\D\widehat{x}^2 +2 \rho  \D \widehat x \D \widehat y +\D\widehat{y}^2}{\rrho \widehat{y}^2}=g^0(\widehat{x}, \widehat{y}),
$$
respectively, for zero correlation
$$
\left(\bar \chi_* u\right)(\bar{x}, \bar{y})
 = \bar \chi_* \left( \frac{\D\bar{x}^2}{\bar{x}^{2\beta}\bar{y}^2}+\frac{\D\bar{y}^2}{\bar{y}^2} \right)
 = \frac{\D\widetilde{x}^2 + \D\widetilde{y}^2}{\widetilde{y}^2}=h(\widetilde{x}, \widetilde{y}).
$$
For any $(\widehat x,\widehat y)\in \Ss^0$ (resp. $(\widetilde{x},\widetilde{y})\in \HH$), the Jacobians read
$$
\nabla  \chi(\widehat{x},\widehat{y}) = 
\begin{pmatrix}
   (1-\beta)^{\frac{\beta}{1-\beta}} \widehat{x}^{\frac{\beta}{1-\beta}} & 0\\
  0 & 1
  \end{pmatrix}
\quad
\textrm{and}
\quad 
\nabla \bar \chi(\widetilde{x},\widetilde{y}) = 
\begin{pmatrix}
   (1-\beta)^{\frac{\beta}{1-\beta}} \widetilde{x}^{\frac{\beta}{1-\beta}} & 0\\
  0 & 1
  \end{pmatrix},
$$
respectively, hence the local pullback property 
is clearly satisfied by $\chi$ (resp. $\bar \chi$). 
The last statement follows by It\^{o}'s lemma.
\end{proof}
The maps $\widetilde \phi_{0}, \overline \phi_{0}$ affect the correlation parameter as follows:
\begin{lemma}\label{CorrHypBM}
The map $\widetilde \phi_{0}: \Ss^{0} \to \HH$ is a global isometry and transforms the SABR model~\eqref{eq:SABRSDE} with $\beta=0$ into a Brownian motion on~$(\HH,h)$.
Furthermore, the heat (or transition) kernel of the solution of the system~\eqref{eq:SABRdynamicsWithStratDrift2} is available in closed form:
$$
\rrho^{-1}K_{\phi_{0}(x,y)}^{h}(s,\phi_{0}(x,y)),\qquad \text{for }s>0\text{ and } (x,y)\in\Ss^{0},
$$
where $K_{(\widetilde{x}, \widetilde{y})}^{h}(s,\cdot)$ denotes the hyperbolic heat kernel 
at $(\widetilde{x}, \widetilde{y})\in \HH$, 
for which a closed-form expression and short- and large-time asymptotics 
are known (\cite[Equation (9.35)]{Grigoryan} and~\cite{HLW}).
\end{lemma}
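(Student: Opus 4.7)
The plan is to split the statement into three parts: $(a)$ the map $\widetilde\phi_0$ is a global isometry from $(\Ss^0,g^0)$ onto $(\HH,h)$; $(b)$ it pushes the solution of \eqref{eq:SABRdynamicsWithStratDrift2} to a Brownian motion on $(\HH,h)$; $(c)$ the transition density of the system is obtained from the hyperbolic one up to the multiplicative constant $\rrho^{-1}$.

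For $(a)$, I would first exhibit the explicit inverse $(\widetilde x,\widetilde y)\mapsto(\rrho\widetilde x+\rho\widetilde y,\widetilde y)$, which is a smooth bijection of $\Hh$ onto itself; this upgrades the pointwise-local statement hidden in Lemma~\ref{Lem:SABRIsoS0} to a global diffeomorphism. A direct one-form computation gives $\D\widetilde x=\rrho^{-1}(\D\widehat x-\rho\,\D\widehat y)$ and $\D\widetilde y=\D\widehat y$, so
$$
\widetilde\phi_0^{*}h
 = \frac{(\D\widehat x-\rho\,\D\widehat y)^2+\rrho^2\,\D\widehat y^2}{\rrho^2\,\widehat y^2}
 = \frac{\D\widehat x^2-2\rho\,\D\widehat x\,\D\widehat y+\D\widehat y^2}{\rrho^2\,\widehat y^2}
 = g^0,
$$
so the pullback identity holds on all of $\Hh$ — in contrast to $\chi$ and $\bar\chi$, whose Jacobians degenerate along $\{x=0\}$.

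For $(b)$, since $\widetilde\phi_0$ is affine, It\^o's formula applied to $(\widetilde X,\widetilde Y):=\widetilde\phi_0(\widehat X,\widehat Y)$ reduces to the substitution of \eqref{eq:SABRdynamicsWithStratDrift2}. Decomposing the correlated Brownian motions $(W,Z)$ into two independent ones and computing quadratic (co)variations, one finds $\D\langle\widetilde X\rangle_t=\D\langle\widetilde Y\rangle_t=\widetilde Y_t^2\,\D t$ and $\D\langle\widetilde X,\widetilde Y\rangle_t=0$; L\'evy's characterization then rewrites the dynamics as $\D\widetilde X_t=\widetilde Y_t\,\D\widetilde W_t$, $\D\widetilde Y_t=\widetilde Y_t\,\D\widetilde Z_t$ for independent $\widetilde W,\widetilde Z$, which is exactly the SDE of Brownian motion on the Poincar\'e half-plane $(\HH,h)$. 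A shorter alternative is to invoke Lemma~\ref{Lem:varphi} and the fact that Riemannian isometries intertwine the Laplace--Beltrami operators, so the generator of $(\widetilde X,\widetilde Y)$ is $\tfrac12\Delta_h$.

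For $(c)$, the Jacobian determinant of $\widetilde\phi_0$ is the constant $1/\rrho$, and $(\widehat X,\widehat Y)=\widetilde\phi_0^{-1}(\widetilde X,\widetilde Y)$. The change-of-variables formula therefore transforms the Lebesgue transition density of hyperbolic Brownian motion into that of $(\widehat X,\widehat Y)$ with the multiplicative prefactor $\rrho^{-1}$, yielding the stated identity; the short- and large-time asymptotics of $K^h$ then transfer verbatim. The main (mild) obstacle I anticipate is bookkeeping of the reference measure: $K^h$ in \cite[Equation~(9.35)]{Grigoryan} is written with respect to the hyperbolic Riemannian volume, and matching it to the Lebesgue density of the SABR system requires tracking both the volume-form ratio $\sqrt{\det g^0}/\sqrt{\det h}=\rrho^{-1}$ and the $\widetilde\phi_0$-Jacobian; once the normalisation convention is fixed, the two factors combine precisely into the announced $\rrho^{-1}$.
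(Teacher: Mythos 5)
Your proposal is correct and follows essentially the same route as the paper: the global isometry is established via the explicit inverse, the constant nonsingular Jacobian and the pullback computation $\widetilde\phi_0^{*}h=g^0$; the transformed dynamics are identified by It\^o/L\'evy as the uncorrelated system, i.e.\ Brownian motion on $(\HH,h)$; and the kernel identity with prefactor $\rrho^{-1}$ is exactly the content of Lemma~\ref{LemmaKernelRelation} applied with $\det(\nabla\widetilde\phi_0)=1/\rrho$. Note only that for an isometry the Jacobian determinant and the volume-form ratio $\sqrt{\det g^0}/\sqrt{\det h}$ are one and the same factor, so a single factor of $\rrho^{-1}$ appears rather than two factors to be reconciled.
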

\begin{proof}
The following shows that~$\phi_{0}$ is in fact a global isometry: 
$\widetilde \phi_{0}$ is onto and invertible on~$\Ss^{0}$ and, for any $(x,y)\in \Ss^{0}$, its Jacobian
$$
\nabla \widetilde \phi_{0}(x,y) = 
\begin{pmatrix}
1/\rrho & -\rho / \rrho\\ 
0 & 1
\end{pmatrix},
$$ 
is independent of~$x$ and does not explode at~$x=0$. 
Furthermore, for any $(x,y)\in\Ss^{0}$,
\begin{align*}
\left(\widetilde \phi_{0_*} h\right)(x,y)
 = & \widetilde \phi_{0_*}\left( \frac{\D\widetilde{x}^2 + \D\widetilde{y}^2}{\widetilde{y}^2} \right)
 = \frac{1}{y^2}\left(\frac{\D x}{\rrho} - \frac{\rho \D y}{\rrho}\right)^2 + \frac{(\D y)^2}{y^2} = g^{0}(x,y).
\end{align*}
The last statement follows from Lemma~\ref{LemmaKernelRelation} together with 
$\det(\nabla \widetilde \phi_{0}(\cdot)) = 1 / \rrho \ne 0$.
One can easily verify by It\^{o}'s lemma that the dynamics ~\eqref{eq:SABRdynamicsWithStratDrift2} for general $\rho \in (-1,1)$ are transformed into~\eqref{eq:SABRdynamicsWithStratDrift2} for $\rho=0$ under the map $\widetilde \phi_{0}$.
\end{proof}

We now verify that $\bar{\phi_0}$ is a `geometry-preserving' map from the general SABR plane~$(\Ss,g)$ 
into the uncorrelated SABR plane~$(\UU,u)$, which of course reduces to the identity map when $\rho=0$, and to $\widetilde{\phi}_0$ when $\beta=0$.
\begin{lemma}\label{Lem:SABRIsoU}
For any $(\rho,x) \in (-1,0]\times \Ss$, 
the map~ $\bar{\phi_0}$ 
is a local isometry between~$(\Ss,g)$ and~$(\UU,u)$.
\end{lemma}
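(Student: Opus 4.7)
The cleanest strategy is to exhibit $\bar{\phi}_0$ as a composition of maps whose isometry properties are already established, appealing to the commutativity statement in Theorem~\ref{Th:MaintheoremSec2}. Concretely, a short calculation (substituting the formula for $\bar{\phi}_0$ and using $\widetilde\varphi^0(\bar x,\bar y)=(\bar x^{1-\beta}/(1-\beta),\bar y)$) gives
$$
\widetilde\varphi^{0}\circ\bar{\phi}_{0}(x,y)
 = \left(\frac{x^{1-\beta}}{\rrho(1-\beta)}-\frac{\rho y}{\rrho},\, y\right)
 = \widetilde\phi_{0}^{0}(x,y),
$$
so that, since $\widetilde\varphi^{0}$ and $\bar\chi$ are inverse to one another on $\Hh_+$, one has $\bar{\phi}_{0}=\bar\chi\circ\widetilde\phi_{0}^{0}$. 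Now $\widetilde\phi_{0}^{0}=\widetilde\phi_{0}\circ\widehat\varphi^{0}$, where $\widehat\varphi^{0}:(\Ss,g)\to(\Ss^{0},g^{0})$ is a local isometry by Lemma~\ref{Lem:varphi} and $\widetilde\phi_{0}:(\Ss^{0},g^{0})\to(\HH,h)$ is a global isometry by Lemma~\ref{CorrHypBM}; and $\bar\chi:(\HH_+,h)\to(\UU,u)$ is a local isometry by Lemma~\ref{Lem:SABRIsoS0}. Composing three (local) isometries yields the claim.

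Before concluding, one has to verify that the composition is well defined, i.e. that $\bar{\phi}_0$ indeed maps $\Ss$ into $\UU$, which amounts to asking that the first coordinate of $\bar{\phi}_0(x,y)$ be positive. For $\rho\in(-1,0]$ and $(x,y)\in\Hh_+$, both $x^{1-\beta}/(\rrho(1-\beta))$ and $-\rho y/\rrho$ are non-negative and the former is strictly positive, so this holds. This also explains the restriction $\rho\leq 0$ stated in the lemma: for $\rho>0$ the quantity inside the power can vanish or become negative, and the intermediate map $\widetilde\phi_{0}^{0}$ then fails to land in $\HH_+$.

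As an independent verification (and in case the reader prefers a self-contained calculation), one can compute the Jacobian of $\bar\phi_0$ directly: writing $A:=x^{1-\beta}/(\rrho(1-\beta))-\rho y/\rrho$, the identity $\bar x^{\beta}=(1-\beta)^{\beta/(1-\beta)}A^{\beta/(1-\beta)}$ gives
$$
\nabla\bar\phi_{0}(x,y)=\begin{pmatrix} \bar x^{\beta}/(\rrho x^{\beta}) & -\rho\,\bar x^{\beta}/\rrho \\ 0 & 1\end{pmatrix},
$$
so that $\D\bar x=(\bar x^{\beta}/\rrho)(x^{-\beta}\D x-\rho\,\D y)$ and $\D\bar y=\D y$. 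Substituting into $u$ and using $\bar y=y$, the cross term $-2\rho\,\D x\,\D y/x^{\beta}$ appears, the coefficient of $\D y^{2}$ combines as $(\rho^{2}+\rrho^{2})/\rrho^{2}=1/\rrho^{2}$, and one recovers exactly $g(x,y)$, confirming the pullback identity $\bar\phi_{0\,*}\,u=g$ pointwise on $\Ss$.

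The only genuine subtlety, and therefore the main thing to be careful about, is the domain issue just mentioned: once the restriction $\rho\in(-1,0]$ is imposed, everything reduces to bookkeeping of already-proven lemmas (composition route) or a short Jacobian computation (direct route). There is no analytic difficulty, and local invertibility at each point follows automatically because the Jacobian above has determinant $\bar x^{\beta}/(\rrho x^{\beta})>0$ on $\Hh_+$.
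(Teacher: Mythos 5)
Your proposal is correct, and your primary argument takes a genuinely different route from the paper: you factor $\bar{\phi}_0=\bar\chi\circ\widetilde\phi_0^0=\bar\chi\circ\widetilde\phi_0\circ\widehat\varphi^0$ (after checking the domain condition forced by $\rho\in(-1,0]$) and invoke the isometry properties of the factors, whereas the paper proves the lemma by the direct computation you relegate to your ``independent verification'': it writes down the partial derivatives of $\bar\phi_0$ and checks that they solve the system of equations expressing the pullback identity $\bar\phi_0^{*}u=g$, i.e.\ exactly your Jacobian calculation with $\D\bar x=(\bar x^{\beta}/\rrho)(x^{-\beta}\D x-\rho\,\D y)$. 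The composition route buys conceptual clarity (it exhibits $\bar\phi_0$ as ``remove $\beta$, remove $\rho$, reinstate $\beta$'' and explains the sign restriction on $\rho$ geometrically), at the cost of some bookkeeping about which facts are already available; the paper's direct computation is self-contained and is what actually feeds into Lemma~\ref{lem:KgKu}, where the explicit Jacobian determinant is needed. Two small cautions on your bookkeeping: (i) Lemma~\ref{Lem:varphi} is a statement about the law of the process $(\widehat X,\widehat Y)$, not literally the pullback identity for $\widehat\varphi^{0}$; the cleaner source for ``$\widehat\varphi^{0}$ is a local isometry'' is that it is the inverse of $\chi$ on $\Hh_+$ together with Lemma~\ref{Lem:SABRIsoS0} (or a one-line Jacobian check), and (ii) you should avoid appealing to the blanket isometry assertion in Theorem~\ref{Th:MaintheoremSec2}, since the proof of that theorem cites the present lemma; you only need the commutativity identities, which are elementary and which you in any case verify directly, so no genuine circularity remains.
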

\begin{proof}
The statement follows directly from the fact that the map $\bar{\phi}_{0}$ and its partial derivatives
\begin{align}\label{PartialDerivativesLocalIsometry3}
\begin{split}
  &\partial_x \bar{x}(x,y) = \frac{x^{-\beta}}{\rrho}(1-\beta)^{\frac{\beta}{1-\beta}} \left(\frac{x^{1-\beta}}{\rrho(1-\beta)}-\frac{\rho y}{\rrho}\right)^{\beta/(1-\beta)},\\
  &\partial_y \bar{x}(x,y) = -\frac{\rho}{\rrho}(1-\beta)^{\frac{\beta}{1-\beta}} \left(\frac{x^{1-\beta}}{\rrho(1-\beta)}-\frac{\rho y}{\rrho}C\right)^{\beta/(1-\beta)},\\
  &\partial_x \bar{y}(x,y) = 0,\qquad
    \partial_y \bar{y}(x,y) = 1,
\end{split}
\end{align}
satisfy the following system of partial differential equations implied by the local pullback property 
$\left(\bar{\phi}_{0}^* u\right)(\bar{x}, \bar{y}) = g(x,y)$,
for any $(x,y)\in \Ss$, $(\bar x, \bar y) \in \mathbb{U}$ for the Riemannian metrics $g$ and $u$:
\begin{equation*}
\left\{
\begin{array}{rl}
\displaystyle \frac{( \partial_x \bar{x})^2 }{\bar{x}^{2\beta}\bar{y}^2}+ \frac{(\partial_x \bar{y})^2}{\bar{y}^2}
 & \displaystyle = \frac{1}{\rrho^2y^2x^{2\beta}},\\ 
\displaystyle \frac{2(\partial_x \bar{x}\partial_y \bar{x}) }{\bar{x}^{2\beta}\bar{y}^2}+ \frac{2(\partial_x \bar{y}\partial_y \bar{y}) }{\bar{y}^2}
 &\displaystyle =
\frac{-2 \rho}{\rrho^2y^2 x^{\beta} },\\
\displaystyle \frac{(\partial_y \bar{x} )^2}{\bar{x}^{2\beta}\bar{y}^2}+ \frac{(\partial_y \bar{y})^2}{\bar{y}^2}
 & \displaystyle = \frac{1}{\rrho^2y^2}.
\end{array}
\right.
\end{equation*}
\end{proof}
As an application of Lemma \ref{Lem:SABRIsoU} it may be possible to relate the absolutely continuous part 
of the distribution of the Brownian motion on the uncorrelated SABR plane~$(\UU,u)$ and
that of the Brownian motion on the general SABR plane~$(\Ss,g)$ 
via the relation~\eqref{KernelRelation} of the heat kernels~\cite{SurfaceMeasuresforBrownianMotion};
this can be performed following similar steps as in~\cite{HLW}, but care is needed, as discussed below.

\begin{lemma}\label{lem:KgKu}
Let $K^g_Z$ and $K^u_Z$ denote the fundamental solutions\footnote{More details about these fundamental solutions can be found on Page~\pageref{page:FundamSol}.}
at~$Z\in \Hh_+$ (i.e. the limit $\lim_{s\downarrow 0} K_Z^g(s,\cdot)=\delta_Z(\cdot)$ is the Dirac delta distribution),  of the heat equations corresponding to the metrics $g$ and $u$.
Then, for any $z=(x,y)\in\Hh_+$,
\begin{equation}\label{KernelRelation}
 K_Z^g(s,z)
 = \frac{(1-\beta)^{\frac{\beta}{1-\beta}}}{\rrho x^{\beta}} \left(\frac{x^{1-\beta}}{\rrho(1-\beta)}-\frac{\rho y}{\rrho}\right)^{\frac{\beta}{1-\beta}}\ K_{\bar{\phi}_{0}(z)}^u(s,\phi_{0}^{0}(z)).
\end{equation}
When $\beta = 1/2$, the formulae simplify to
$\bar{\phi}_{0}(x,y) \equiv \left( \frac{1}{(1-\rho)^2 }\left( x-\sqrt{x}\rho y+\frac{\rho^2 y^2}{4}\right),y \right)$, and
$\det(\nabla \bar{\phi}_{0}(x,y)) = \left( 1-\frac{\rho y }{2 \sqrt{x}}\right) / (1-\rho)^2$,
for all $(x,y)\in\Ss$.
\end{lemma}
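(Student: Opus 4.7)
The plan is to apply the general heat-kernel transformation rule for local isometries (Lemma~\ref{LemmaKernelRelation}) to the map $\bar{\phi}_{0}:(\Ss,g)\to(\UU,u)$, which Lemma~\ref{Lem:SABRIsoU} identifies as a local isometry under the standing hypothesis $\rho\in(-1,0]$. That rule states that, whenever a diffeomorphism $\phi:(M,g_{M})\to(N,g_{N})$ satisfies $\phi^{*}g_{N}=g_{M}$, the fundamental solutions of the associated heat equations---viewed as densities with respect to the ambient Lebesgue measure, in accordance with the Dirac initial condition $\lim_{s\downarrow 0}K^{g_{M}}_{Z}(s,\cdot)=\delta_{Z}$---are related by
$$
K^{g_{M}}_{Z}(s,z)=|\det\nabla\phi(z)|\,K^{g_{N}}_{\phi(Z)}(s,\phi(z)).
$$
The determinant factor reconciles the Riemannian volume elements of $(M,g_{M})$ and $(N,g_{N})$ with the ambient Lebesgue measures on the two charts; it is the same mechanism that produced the factor $\rrho^{-1}$ in Lemma~\ref{CorrHypBM}.

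The only substantive computation is then the Jacobian determinant of $\nabla\bar{\phi}_{0}$. Reading it off the partial derivatives~\eqref{PartialDerivativesLocalIsometry3} already obtained in the proof of Lemma~\ref{Lem:SABRIsoU}, the lower-triangular structure ($\partial_{x}\bar{y}=0$, $\partial_{y}\bar{y}=1$) yields $\det\nabla\bar{\phi}_{0}(x,y)=\partial_{x}\bar{x}(x,y)$, which equals
$$
\frac{(1-\beta)^{\beta/(1-\beta)}}{\rrho\,x^{\beta}}\left(\frac{x^{1-\beta}}{\rrho(1-\beta)}-\frac{\rho y}{\rrho}\right)^{\beta/(1-\beta)},
$$
i.e.\ exactly the prefactor in~\eqref{KernelRelation}. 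Substituting into the general relation above yields the claimed identity. The $\beta=1/2$ simplifications follow by direct substitution: $(1-\beta)^{1/(1-\beta)}=1/4$, together with the expansion $(2\sqrt{x}-\rho y)^{2}=4x-4\sqrt{x}\rho y+\rho^{2}y^{2}$, gives the displayed expression for the $\bar{x}$-coordinate, while inserting $\beta=1/2$ into the determinant formula above reduces it to $\bigl(1-\rho y/(2\sqrt{x})\bigr)/\rrho^{2}$.

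The main obstacle is the local nature of the isometry: the kernel identity transports pointwise densities only over regions on which $\bar{\phi}_{0}$ is a bijection onto its image. One must therefore ensure positivity of the quantity $\tfrac{x^{1-\beta}}{\rrho(1-\beta)}-\tfrac{\rho y}{\rrho}$ being raised to a fractional power---precisely what forces the standing restriction $\rho\in(-1,0]$ inherited from Lemma~\ref{Lem:SABRIsoU}---and rule out additional pre-images of the image points that would contaminate the kernel on $(\UU,u)$ by reflected contributions absent from $(\Ss,g)$. Under $\rho\leq 0$, injectivity of $\bar{\phi}_{0}$ on $\Hh_{+}$ is transparent from its explicit form, so this gap closes and the pointwise identity~\eqref{KernelRelation} holds as stated.
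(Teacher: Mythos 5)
Your proposal is correct and takes essentially the same route as the paper's proof: invoke the kernel-transformation rule of Lemma~\ref{LemmaKernelRelation} for the local isometry $\bar{\phi}_{0}$ from Lemma~\ref{Lem:SABRIsoU}, and read the Jacobian determinant off the partial derivatives~\eqref{PartialDerivativesLocalIsometry3}, the lower-triangular structure giving $\det\nabla\bar{\phi}_{0}=\partial_x\bar{x}$, which is exactly the prefactor in the statement. One small remark: your $\beta=1/2$ substitution (correctly) produces the factor $\rrho^{2}=1-\rho^{2}$ in the denominators, whereas the lemma as printed shows $(1-\rho)^{2}$ --- a typo in the paper, not a flaw in your argument.
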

\begin{proof}
The statement follows from Lemma~\ref{LemmaKernelRelation}:
the Radon-Nikodym derivatives are 
$\frac{\D z}{\D\mu_g(z)}=\rrho^2y^2x^{\beta}$ and $\frac{\D\bar{z}}{\D\mu_u(\bar{z})}=\bar{y}^2\bar{x}^{\beta}$,
with $\mu_g$ and $\mu_u$ the Riemannian volume elements 
on~$\Ss$ and~$\UU$ (Definition~\ref{def:ManifoldFundamenalSol} in Appendix~\ref{app:Geometry}),
and the Jacobian of $\bar{\phi}_{0}$ at $z=(x,y)\in \Ss$ is as in~\eqref{PartialDerivativesLocalIsometry3},
so that
$$
\det\left(\nabla \bar{\phi}_{0}(x,y)\right)
 = \frac{(1-\beta)^{\frac{\beta}{1-\beta}}}{\rrho x^{\beta}} \left(\frac{x^{1-\beta}}{\rrho(1-\beta)}-\frac{\rho y}{\rrho}\right)^{\frac{\beta}{1-\beta}}.
$$
\end{proof}
\label{rem:SingularDeterminant}
Such a relation of heat kernels relies on the commutativity property of Laplace-Beltrami operators in Lemma~\ref{lem:CommLaplBeltr}, 
which is not meaningful for~\eqref{eq:LaplaceBeltrSabr} at~$x=0$ for general~$\beta$. 
Hence a statement relating the heat kernels might not hold true in the vicinity of the origin. 
Although in the case of exploding Jacobians the relation~\eqref{KernelRelation} 
of `kernels' formally indicates that the map under consideration induces an atom, 
it does not allow for an exact computation.
All the maps introduced above, except~$\bar{\phi}_{0}$ and~$\widetilde\phi_{0}^{0}$, are defined on 
$(0,\infty)^2$ and can be extended by continuity along the ray $\{0\}\times (0,\infty)$; 
they are however not differentiable there.
A direct application of It\^o's lemma would therefore fail, and an enhanced version would be needed, 
which in turn could (alluding to It\^o-Tanaka-Meyer-type formulae~\cite[Theorem 1.5, Chapter VI.1]{RY}) induce local times there.
However, we bypass this issue by imposing Dirichlet (absorbing) boundary conditions along this particular ray.
A statement similar to Lemma~\ref{lem:KgKu} below was made in~\cite{HLW} 
relating~$K^g$ to the hyperbolic heat kernel~$K^h$;
in their analysis, the determinant was $\det (\nabla \widetilde \phi_0^0(x,y)) \equiv x^{-\beta} / \rho$.

The knowledge of the exact form of the absolutely continuous part of the distribution would provide a means to infer the probability of the process~$X$ hitting its boundary.
This, however, would involve intricate formulae with multiple integrals. 
Instead, in the following section, we compute this probability in a more concise way, 
and use the knowledge of the kernel only to show that introducing first~$\beta$ then~$\rho$ (or conversely)
has no influence on this probability.

\section{Probability of hitting the boundary}\label{SS:ltb}
Having characterised the isometries between the Brownian motion on the hyperbolic plane
and a more general version, with drift, on the SABR plane, 
we derive here a concise formula to compute the hitting probability ~$\PPP$ (in~\eqref{eq:PMass}) 
of the boundary of this general process.
A key ingredient here is to note that this probability is equal to the limit of $\PP(X_t=0)$ as~$t$ tends to infinity.
We shall also determine the influence of the model parameters $(\beta,\rho)$ on this quantity.
The computation of this probability (Theorem~\ref{thm:Main} below) 
follows the works of Hobson~\cite{Hobson} on time changes.
We apply such a technique to progress from the Brownian motion on the correlated hyperbolic plane \eqref{eq:SABRdynamicsWithStratDrift2}
to a correlated Brownian motion on the Euclidean plane.
The joint distribution of hitting times of zero of two (correlated) Brownian motions without drift
was first established by Iyengar~\cite{Iyengar}, and refined by Metzler~\cite{Metzler}
(see also~\cite{BranchingModel} for further results on hitting times of correlated Brownian motions).
We also borrow some ideas from~\cite{DoeringHorvathTeichmann}, 
where Hobson's construction for the normal SABR model~\cite[Example 5.2]{Hobson} 
is extended to~\eqref{eq:SABRDrift} for general $\beta\in [0,1]$. 
This indeed follows from the observation that stochastic time change methods, 
going back to Volkonskii~\cite{Volkonski}, can still be applied to the Brownian motion on the SABR plane.
In order to formulate our next statement, we introduce several auxiliary parameters (see~\cite{Metzler}): 
$$
a_1 := \frac{x_0^{1-\beta}}{1-\beta},
\qquad 
a_2 := \frac{y_0}{\nu},
\qquad
r_0 := \sqrt{\frac{a_1^2+a_2^2-2\rho a_1a_2}{\rrho^2}},
$$
\begin{equation*}
\alpha := \left\{ \begin{array}{lll}
\displaystyle \pi + \arctan\left(-\frac{\rrho}{\rho}\right)	, & \mbox{if $\rho> 0$},\\
\displaystyle \frac{\pi}{2}, & \mbox{if $\rho=0$},\\
\displaystyle \arctan\left(-\frac{\rrho}{\rho}\right), & \mbox{if $\rho< 0$},
\end{array}
\right.
\qquad\qquad
\theta_0 := \left\{ \begin{array}{lll}
\displaystyle \pi + \arctan\left(\frac{a_{2}\rrho}{\rho}\right), & \mbox{if $a_1<\rho a_2$},\\
\displaystyle \frac{\pi}{2}, & \mbox{if $a_1=\rho a_2$},\\
\displaystyle \arctan\left(\frac{a_{2}\rrho}{\rho}\right), & \mbox{if $a_1>\rho a_2$}.
\end{array}
\right.
\end{equation*}
\begin{theorem}\label{thm:Main}
For the SDE~\eqref{eq:SABRDrift}, 
the probability~\eqref{eq:PMass} satisfies
$$
\PPP = \int_0^{\infty}\D t\int_0^t f(s,t)\D s,
$$
where for any $s<t$ 
($\mathrm{I}_{z}$ denotes the modified Bessel function of the first kind~\cite[Page 638]{Borodin}),
\begin{align*}
f(s, t) &= \frac{\pi\sin(\alpha)}{2\alpha^2 (t-s)\sqrt{s(t-s \cos^2(\alpha))}}\exp\left(-\frac{r_0^2}{2s}\frac{t-s\cos(2\alpha)}{2t-s(1+\cos(2\alpha))}\right) \\
&\quad\times\sum_{n=1}^{\infty}n\sin\left(\frac{n\pi(\alpha-\theta_0)}{\alpha}\right)
\mathrm{I}_{\frac{n\pi}{2\alpha}}
\left(\frac{r_0^2}{2s}\frac{t-s}{2t-s(1+\cos(2\alpha))}\right).
\end{align*}
\end{theorem}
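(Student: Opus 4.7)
The plan is to reduce Theorem~\ref{thm:Main} to the classical question of the joint distribution of first hitting times of zero by two correlated driftless Brownian motions, as solved by Iyengar and refined by Metzler~\cite{Metzler}. The reduction proceeds in two stages: first collapse the $\beta$-dependence via the isometry $\widehat\varphi^0$, then collapse the $\widehat Y$-dependence via a Volkonskii-type time change.

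For the first stage, Remark~\ref{RemarktoSection3} (which rests on Lemma~\ref{Lem:varphi}) shows that $\PPP$ equals the probability that the $\beta=0$ process $\widehat X$ from~\eqref{eq:SABRdynamicsWithStratDrift2} reaches zero; the initial conditions are $\widehat X_0 = a_1$ and $\widehat Y_0/\nu = a_2$, so the parameters $a_1, a_2$ of the statement arise naturally as the initial coordinates. For the second stage, I would compute
\begin{equation*}
\langle \widehat X\rangle_t = \langle \widehat Y/\nu\rangle_t = \int_0^t \widehat Y_s^2 \D s =: A_t, \qquad \langle \widehat X, \widehat Y/\nu\rangle_t = \rho A_t,
\end{equation*}
and introduce the clock change $\tau_s := A^{-1}(s)$, well defined on $[0, A_\infty)$ because $\widehat Y$ is a positive geometric Brownian motion tending to $0$ almost surely. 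Under $\tau$, the pair $(\widehat X_{\tau_s}, \widehat Y_{\tau_s}/\nu)$ becomes a pair of correlated Brownian motions $(a_1 + \widetilde B^1_s, a_2 + \widetilde B^2_s)$ with correlation $\rho$. Because $\widehat Y_{\tau_s}/\nu$ is strictly positive on $[0, A_\infty)$ and tends to $0$ as $s \uparrow A_\infty$, the identity $A_\infty = \tau^2$ holds almost surely, where $\tau^i := \inf\{s: a_i + \widetilde B^i_s = 0\}$. Consequently, $\{\widehat X_t = 0 \text{ for some } t > 0\} = \{\tau^1 < \tau^2\}$, and $\PPP = \int_0^\infty\int_0^t f(s,t)\D s\D t$, with $f$ the joint density of $(\tau^1, \tau^2)$ on the event $\{\tau^1 < \tau^2\}$.

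The final stage is to read off $f$ from Metzler's formula. Iyengar's linear decorrelation maps the positive quadrant onto a wedge of opening angle $\alpha$ with the starting point sitting at polar coordinates $(r_0, \theta_0)$---precisely the quantities defined before the statement. Solving the heat equation on this wedge with Dirichlet boundary conditions by separation of variables produces a Fourier series in the angular variable whose radial components satisfy a Bessel-type equation, yielding the modified Bessel functions $\mathrm{I}_{n\pi/(2\alpha)}$. Identifying the resulting series with the expression claimed in the theorem is then a matter of unwinding Metzler's normalization.

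The main obstacle I foresee is the identification $A_\infty = \tau^2$ in the second stage, since $A_\infty$ is a proper random variable and one must rule out the possibility that the time-changed second coordinate vanishes prematurely. This rests on the Dambis-Dubins-Schwarz representation of the martingale $\widehat Y/\nu$ combined with the fact that geometric Brownian motion stays strictly positive at all finite times, so that $s \mapsto a_2 + \widetilde B^2_s$ can only touch $0$ at $s = A_\infty$. Once this identification is in hand, matching the parameters $(r_0, \alpha, \theta_0)$ with the definitions preceding the theorem statement is routine.
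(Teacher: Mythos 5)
Your proposal is correct and follows essentially the same route as the paper: reduce to the $\beta=0$ process $\widehat X$ via $\widehat\varphi^{0}$ (Lemma~\ref{Lem:varphi} and Remark~\ref{RemarktoSection3}), convert the pair into correlated driftless Brownian motions by a Volkonskii/Hobson time change so that hitting zero corresponds to $\{\tau^1<\tau^2\}$, and read off the joint density from Iyengar/Metzler. The only (immaterial) difference is the direction of the time change: you apply the Dambis--Dubins--Schwarz clock $A^{-1}$ directly to $(\widehat X,\widehat Y/\nu)$, whereas the paper starts from the auxiliary Brownian pair~\eqref{eq:SABRdynamicsWithStratDrift3} and time-changes it forward via Hobson's Theorem~3.1.
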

\begin{remark}\label{R:rem1}
In the uncorrelated case $\rho=0$, the expressions in Theorem~\ref{thm:Main} simplify to
$\alpha = \frac{\pi}{2}$, $\theta_0 = \arctan\left(\frac{a_2}{a_1}\right)$, $r_0 = \sqrt{a_1^2 + a_2^2}$, and
$$
f(s, t) = \frac{2}{\pi (t-s)\sqrt{st}}
\exp\left(-\frac{r_0^2(t+s)}{4st}\right)
\sum_{n=1}^{\infty}n\sin\left(2n\left(\frac{\pi}{2}-\theta_0\right)\right)\mathrm{I}_{n}
\left(\frac{r_0^2(t-s)}{4st}\right).
$$
\end{remark}
\begin{remark}
When $\beta=0$, in Theorem~\ref{thm:Main} above, $a_1$ is equal to the starting point~$x_0$.
In this case~\eqref{eq:SABRDrift} corresponds
to the original `normal' SABR model~\eqref{eq:SABRSDE} for any $\rho \in (-1,1)$.
\end{remark}
\begin{proof}
Recalling the process $\widehat{X}$ in~\eqref{E:inad}, and the SDE~\eqref{eq:SABRdynamicsWithStratDrift2},
we wish to apply~\cite[Theorem 3.1]{Hobson} to~\eqref{eq:SABRdynamicsWithStratDrift2}. Consider
the system of SDEs
\begin{equation}\label{eq:SABRdynamicsWithStratDrift3}
\begin{array}{rll}
\D \widetilde{X}_t & =\D \widetilde{W}_t, \qquad & \widetilde{X}_0 = \widehat{x}_0,\\
\D \widetilde{Y}_t & = \nu \D \widetilde{Z}_t, \qquad & \widetilde{Y}_0 = y_0,\\
\D \langle \widetilde{W},\widetilde{Z}\rangle_t & = \rho \D t,
\end{array}
\end{equation}
where $(\widetilde{W},\widetilde{Z})$ is a two-dimensional correlated Brownian motion. 
With the time-change process
\begin{align}\label{StoppingTime}
\tau(t) := \inf\left\{ u\geq 0: \int_0^u \wY_s^{-2}\D s\geq t \right\},
\end{align}
Theorem 3.1 in~\cite{Hobson} implies that 
\begin{equation}\label{E:arr}
\widehat{X}_t = \widetilde{X}_{\tau(t)}
\qquad\mbox{and}\qquad 
Y_t = \widetilde{Y}_{\tau(t)},
\end{equation}
for all $t\geq 0$. 
In addition, the map $\widehat \varphi^{0}$ in~\eqref{E:inad} gives 
$\displaystyle X_t=\left(x_0^{1-\beta} + (1-\beta)\widetilde{W}_{\tau(t)}\right)^{1/(1-\beta)}$
for all $t\geq 0$.
Let now $\varepsilon$ denote the explosion time of~\eqref{eq:SABRdynamicsWithStratDrift3}, 
namely the first time that either~$\widetilde{X}$ or~$\widetilde{Y}$ hits zero. 
It is also the first time that the process~$\widetilde{W}$ hits the level $-\widehat{x}_0$ 
or that~$\widetilde{Z}$ hits~$-y_0 / \nu$.
Set 
$\Gamma_t := \int_0^t \wY_s^{-2}\D s$
and 
$\zeta := \lim_{t\uparrow\varepsilon}\Gamma_t$.
The process~$\Gamma$ is strictly increasing and continuous, 
so that its inverse~$\Gamma^{-1}$ is well defined, 
and clearly the time-change process~\eqref{StoppingTime} satisfies $\tau=\Gamma^{-1}$. 
Consider a new filtration~$\mathcal G$ and two processes $W$ and $Z$ defined, for each $t\geq 0$, by
$\mathcal G_t := \mathcal F_{\tau(t)}$,
$$
W_t := \int_0^{\tau(t)}\frac{\D\widetilde{W}_s}{\widetilde{Y}_s}\D s
\qquad\mbox{and}\qquad
Z_t := \int_0^{\tau(t)}\frac{\D\widetilde{Z}_s}{\widetilde{Y}_s}\D s.
$$ 
Up to time~$\zeta$, $W$ and~$Z$ are $\mathcal G$-adapted Brownian motions, 
and the system $(W,Z,\widehat{X},Y)$ is a weak solution to (\ref{eq:SABRdynamicsWithStratDrift2}).
It is therefore clear that 
$
\PP\left(\tau^{\widetilde{X}}_0 \in \D s, \tau^{\widetilde{Y}}_0 \in \D t\right)
=\PP\left(\tau^{\widetilde{W}}_{-\widehat{x}_0} \in \D s, \tau^{\widetilde{Z}}_{-y_0/\nu} \in \D t\right).
$
Moreover, it follows from \cite[Equation 3.2]{Metzler} with $\vec{\mu}=\vec{0}$, 
$\vec{x}_0=(\widehat{x}_0,y_0)$, and $$
\sigma=\left( \begin{array}{cc}
\rrho & \rho \\
0 & \nu \\
\end{array} \right),
$$    
that
$\PP\left(\tau^{\widetilde{X}}_0 \in \D s, \tau^{\widetilde{Y}}_0 \in \D t\right) = f(s, t) \D s \D t$,
where the function $f$ is defined in Theorem~\ref{thm:Main}, so that
\begin{equation}
\PP\left(\tau_0^{\widetilde{X}} < \tau_0^{\widetilde{Y}}\right) = \int_{0}^{\infty}\D t\int_{0}^{t}f(s,t)\D s.
\label{E:itf3}
\end{equation}

Reversing the arguments presented in~\cite{DoeringHorvathTeichmann, Hobson}, 
the probability $\PP(\tau_0^{\widetilde{X}} < \tau_0^{\widetilde{Y}})$ 
coincides with the probability that the process $\widehat{X}$ hits zero over the time horizon $[0,\infty)$. 
Indeed, through~\eqref{E:arr}, the time change~\eqref{StoppingTime} 
converts the Brownian motion~$\widetilde{Y}$ into a geometric Brownian motion~$Y$ started at~$y_0>0$, 
so that the (a.s. finite) point $\tau^{\widetilde{Y}}_0$ is mapped to $\tau_0^Y = \infty$. 
Therefore the time-changed process $\widetilde{X}$ over~$[0,\tau^{\widetilde{Y}}_0)$ 
corresponds to~$\widehat{X}$ considered over~$[0,\infty)$ and, using~\eqref{E:arr}, we obtain
$$
\PP\left(\tau_0^{\widetilde{X}} < \tau_0^{\widetilde{Y}}\right)
 = \PP\left(\tau_0^{\widehat{X}} < \tau_0^Y\right)
 = \PP\left(\tau_0^{\widehat{X}} < \infty\right)
 = \PP\left(\widehat{X}_t=0, \text{ for some } t\in (0,\infty)\right),
$$
and Theorem~\ref{thm:Main} follows from~\eqref{E:forsome} and~\eqref{E:itf3}.
\end{proof}

\begin{remark}\label{R:interior}
For the normal SABR model ($\beta=0$) in~\eqref{eq:SABRSDE}, 
Hobson~\cite[Example 5.2]{Hobson} found the following formula for the price process $X$: 
$$
X_t=\frac{\rho}{\nu}\left(\widetilde {Y}_{\tau(t)} -y_0\right)+\rrho^2\widetilde{Z}_{\tau(t)},
\qquad\text{for all }t\geq 0,
$$
where the process~$\widetilde{Y}$ and the Brownian motion~$\widetilde{Z}$ 
are the same as in~\eqref{eq:SABRdynamicsWithStratDrift3}, and $\tau$ is as in~\eqref{StoppingTime}.
\end{remark}

\begin{remark}\label{R:rem0}
For $\beta=1$, the SDEs~\eqref{eq:SABRSDE} and~\eqref{eq:SABRDrift} read
$$
\D X_t = X_t Y_t\D W_t
\qquad\text{and}\qquad
\D X_t = X_t\left(Y_t \D W_t +\frac{1}{2}Y_t^2\D t\right),
$$
respectively, and, by the Dol\'{e}ans-Dade formula~\cite[Section IX-2]{RY},
the solutions to these equations are exponential functionals, 
and therefore do not exhibit mass at the origin.
\end{remark}


\appendix 
\section{Reminder on the heat equation on manifolds}\label{app:Geometry}
We recall some standard results on heat kernels on Riemannian manifolds, 
needed in Section~\ref{sec:Mappings}.
For a given metric~$g$, we denote by~$\Delta_g$ the corresponding Laplace-Beltrami operator.
Following the notations from~\cite[Section 3.12]{Grigoryan}, 
let $k \in \mathbb{N} \cup \{\infty\}$, 
$M_1, M_2$ two $C^{k+2}$-manifolds 
and $\phi:M_2 \to M_1$ a $C^{k+2}$-diffeomorphism which is
an isometry between $(M_2,g_2)$ and $(M_1,g_1)$. 
Any function~$f$ on~$M_1$ induces a pullback function~$\phi_* f$ on~$M_2$
by the relation $\phi_* f = f\circ\phi$.
We start with a fundamental property of this operator (\cite[Lemma 3.27]{Grigoryan}).

\begin{lemma}\label{lem:CommLaplBeltr}
The Laplace-Beltrami operator~$\Delta_{g_i}$ ($i=1,2$) commutes with~$\phi$ 
in the sense that $\Delta_{g_2}(\phi_* f)= \phi_*(\Delta_{g_1} f)$
holds for any $f \in C^{k+2}(M_1)$.
\end{lemma}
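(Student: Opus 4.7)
The plan is to reduce the statement to the invariance of gradient and divergence under a Riemannian isometry, and then combine the two using the standard decomposition $\Delta_g = \mathrm{div}_g \circ \mathrm{grad}_g$. This is the cleanest route because it avoids working in coordinates and instead exploits the fact that every ingredient in the definition of $\Delta_g$ (the musical isomorphism $\sharp$ used to build $\mathrm{grad}_g$, and the Riemannian volume form used to build $\mathrm{div}_g$) is constructed from $g$ alone, so that an isometry transports them in a covariant fashion.

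First I would verify that for any $f \in C^{k+2}(M_1)$, the pullback commutes with the gradient in the sense that $\mathrm{grad}_{g_2}(\phi_* f) = \phi^{*}\mathrm{grad}_{g_1} f$, where $\phi^{*}$ denotes the pullback of vector fields via $\mathrm{d}\phi^{-1}$. This follows from the identity $g_2 = \phi^{*} g_1$ (the defining property of the isometry), combined with the fact that the differential $\mathrm{d}(\phi_* f) = \phi^{*}\mathrm{d} f$ is nothing but the chain rule. Dualising through the metric (which is preserved) then gives the claimed relation for $\mathrm{grad}$.

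Next I would establish the analogous statement for the divergence: for any $C^{k+1}$ vector field $V$ on $M_1$, $\mathrm{div}_{g_2}(\phi^{*} V) = \phi_*(\mathrm{div}_{g_1} V)$. This is the invariance of divergence under isometries, and it comes from the fact that $\phi$ pulls back the Riemannian volume form $\mathrm{d}\mu_{g_1}$ to $\mathrm{d}\mu_{g_2}$ (as $\det(\mathrm{d}\phi)$ with respect to orthonormal frames has modulus one), together with the intrinsic definition $\mathrm{div}_g V \,\mathrm{d}\mu_g = \mathcal{L}_V \mathrm{d}\mu_g$. Composing the two intertwining relations yields
\[
\Delta_{g_2}(\phi_* f) = \mathrm{div}_{g_2} \mathrm{grad}_{g_2}(\phi_* f) = \mathrm{div}_{g_2}(\phi^{*}\mathrm{grad}_{g_1} f) = \phi_*(\mathrm{div}_{g_1} \mathrm{grad}_{g_1} f) = \phi_*(\Delta_{g_1} f),
\]
which is exactly the claim.

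If one prefers a purely local-coordinate proof, the alternative is to write $\Delta_g f = |g|^{-1/2}\partial_i(|g|^{1/2} g^{ij}\partial_j f)$ in a chart, transform the metric components via $(g_2)_{ij} = \partial_i\phi^\alpha \partial_j\phi^\beta (g_1)_{\alpha\beta}$, and push the derivatives through using the chain rule. The bookkeeping step, in which one must cancel the Jacobian factors $\det(\mathrm{d}\phi)$ against the transformation of $\sqrt{|g|}$, is the only real obstacle; the invariant approach above avoids this entirely, which is why I would prefer it. Since both $M_1$ and $M_2$ are $C^{k+2}$ and $\phi$ is a $C^{k+2}$-diffeomorphism, all derivatives appearing are well defined, and no additional regularity hypothesis is needed.
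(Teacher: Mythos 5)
Your proposal is correct. Note that the paper offers no proof of this lemma at all: it simply cites~\cite[Lemma 3.27]{Grigoryan}, and your argument via the decomposition $\Delta_g=\mathrm{div}_g\circ\mathrm{grad}_g$, with the gradient intertwined through the preserved metric and the divergence through the preserved Riemannian volume (Lie-derivative characterisation), is precisely the standard proof underlying that citation; the only cosmetic caveat is that on a non-orientable manifold one should phrase the volume-form step in terms of the Riemannian density, which is immaterial here since the spaces in question are planar domains.
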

\begin{definition}\label{def:ManifoldFundamenalSol}
Let $(M,g)$ be a smooth Riemannian manifold and $Z\in M$.
The smooth function
$p_{Z}:  (0,\infty)\times M \to\mathbb{R}$
is a fundamental solution at $Z$ of the heat equation on $(M,g)$ if:
\begin{enumerate}
\item[(i)] it solves the heat equation $\Delta_g p_Z = \partial_{t} p_Z$ on~$(M,g)$;
\item [(ii)] $\lim_{t\downarrow 0}p_Z(t,\cdot) = \delta_Z(\cdot)$, 
where $\delta_Z$ denotes the Dirac measure at~$Z\in M$:
$$
\lim_{t\downarrow 0}\int_M p_Z(t,z) f(z) \mu_g(\D z)=f(Z), 
$$
for all test functions 
$f \in C^{\infty}_0(M)$, with $\mu_g(\D z)$ being the Riemannian volume element at~$z$.
\end{enumerate}
The fundamental function $p_Z$ is said to be regular if furthermore $p_Z\geq0$ and
$\int_M p_Z(t,z)\mu_g(\D z)  \leq 1$.
\end{definition}

\begin{proposition}\label{RelationFundamentalSol}
Let $k \in \mathbb{N}\cup\{0\} \cup\{\infty\}$, $\phi:(M_2,g_2)\to(M_1,g_1)$ a $C^{k+2}$-smooth isometry, 
$p^{g_1}_{Z_1}$ the fundamental solution at $Z_1\in M_1$ of the heat equation on $(M_1,g_1)$,
and let $Z_2\in M_2$ be such that $\phi(Z_2)=Z_1$. 
Then the map
$(t, z_2)\mapsto p^{g_1}_{\phi(Z_2)}(t,\phi(z_2)) \equiv \phi_*p^{g_1}_{Z_1}(t,z_1)$
is the (unique) fundamental solution at~$Z_2$ of the heat equation on $(M_2,g_2)$.
\end{proposition}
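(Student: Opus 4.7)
The plan is to verify the two defining conditions of Definition~\ref{def:ManifoldFundamenalSol} for the pullback $q(t,z_2):=p^{g_1}_{Z_1}(t,\phi(z_2))$, exploiting the two structural consequences of $\phi$ being an isometry: the commutation of Laplace--Beltrami operators with pullback provided by Lemma~\ref{lem:CommLaplBeltr}, and the invariance of the Riemannian volume form $\mu_g$ under an isometric diffeomorphism.

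First I would establish the PDE. Since $\phi$ is time-independent, $\partial_t(\phi_* p^{g_1}_{Z_1})=\phi_*(\partial_t p^{g_1}_{Z_1})$, while Lemma~\ref{lem:CommLaplBeltr}, applied pointwise in $t$, gives $\Delta_{g_2}(\phi_* p^{g_1}_{Z_1}) = \phi_*(\Delta_{g_1} p^{g_1}_{Z_1})$. Combining these with the heat equation satisfied by $p^{g_1}_{Z_1}$ on $(0,\infty)\times M_1$ yields $\Delta_{g_2} q = \partial_t q$ on $(0,\infty)\times M_2$, which is condition (i).

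Next I would check the Dirac-type initial condition. Because $\phi$ is a $C^{k+2}$-smooth isometry between $(M_2,g_2)$ and $(M_1,g_1)$, the standard change of variables on Riemannian manifolds gives $\phi_*\mu_{g_2}=\mu_{g_1}$, so that for any test function $f\in C^\infty_0(M_2)$,
$$
\int_{M_2} q(t,z_2)\,f(z_2)\,\mu_{g_2}(\D z_2)
 = \int_{M_1} p^{g_1}_{Z_1}(t,z_1)\,\bigl(f\circ\phi^{-1}\bigr)(z_1)\,\mu_{g_1}(\D z_1).
$$
Since $f\circ\phi^{-1}\in C^\infty_0(M_1)$ and $p^{g_1}_{Z_1}$ is a fundamental solution at $Z_1=\phi(Z_2)$, the right-hand side converges to $(f\circ\phi^{-1})(Z_1)=f(Z_2)$ as $t\downarrow 0$, giving (ii).

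For uniqueness I would invoke the classical uniqueness result for fundamental solutions of the heat equation on Riemannian manifolds (see \cite{Grigoryan}). The main obstacle I anticipate is not the two checks above, which are formal once the isometry properties are in hand, but precisely this uniqueness claim: on non-complete manifolds (as arise here, e.g. $\Hh_+$) the fundamental solution is generally not unique without a further selection principle, typically requiring one to distinguish the minimal (Dirichlet) heat kernel via an additional boundary prescription. In our setting this is exactly the absorbing condition along $\{0\}\times(0,\infty)$ imposed in the paragraph following Lemma~\ref{lem:KgKu}, and uniqueness then follows from the standard uniqueness of the minimal positive heat kernel on the open manifold; alternatively one may interpret the proposition as an equality within the class of fundamental solutions singled out by that prescription, so that the pullback construction transports uniqueness from $(M_1,g_1)$ to $(M_2,g_2)$ tautologically.
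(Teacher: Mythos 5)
Your proof follows essentially the same route as the paper's: condition (i) of Definition~\ref{def:ManifoldFundamenalSol} via Lemma~\ref{lem:CommLaplBeltr} together with the time-independence of $\phi$, and condition (ii) via the isometry-invariance of the Riemannian volume form and a change of variables (the paper pulls a test function back from $M_1$ rather than pushing one forward from $M_2$, an immaterial difference). Your closing remarks on uniqueness actually go beyond the paper, whose proof verifies only (i) and (ii) and leaves the parenthetical uniqueness claim unaddressed; your caveat that on non-complete manifolds such as $\Hh_+$ a selection principle (the minimal/Dirichlet kernel, consistent with the absorbing condition imposed after Lemma~\ref{lem:KgKu}) is needed is apt and correct.
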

\begin{proof}
Lemma~\ref{lem:CommLaplBeltr} implies that Definition~\ref{def:ManifoldFundamenalSol}(i) holds for the above map.
The operator~$\Delta_{g_2}$ acts only on the space variable $z_2 \in M_2$ 
and not on the fixed point $Z_2 \in M_2$, so that
\begin{align}\label{FundSolIsomInvariant}
\Delta_{g_2} p^{g_1}_{\phi(Z_2)}(t,\phi(z_2))
 & = \Delta_{g_2} \left(\phi_*p^{g_1}_{Z_1}(t,z_1)\right)
 = \phi_*\left( \Delta_{g_1}p^{g_1}_{Z_1}(t,z_1) \right)
= \phi_*\left( \frac{\partial }{\partial t}p^{g_1}_{Z_1}(t,z_1) \right)\nonumber\\
 & = \frac{\partial }{\partial t} p^{g_1}_{\phi(Z_2)}(t,\phi(z_2)),
\end{align}
with $z_1:=\phi(z_2)$, $Z_1:=\phi(Z_2)$, where the first equality follows from the pullback relation,
the second from the commutativity relation in Lemma~\ref{lem:CommLaplBeltr}, 
and the third one since~$p^{g_1}_{Z_1}$ satisfies the heat equation on~$(M_1,g_1)$.
We now check Definition~\ref{def:ManifoldFundamenalSol}(ii).
Let $f_{1} \in C_0^{\infty}(M_1)$ be a test function and $f:= \phi_* f_{1}$.
Set $z_{1} = \phi(z_2)$ and $Z_{1} = \phi(Z_2)$ for any $z_2, Z_2 \in M_2$. 
Given that~$\phi$ is an isometry, 
so is~$\phi^{-1}$ and the pullback $(\phi^{-1})_*\mu_{g_2}(\D\cdot)$ coincides with 
the volume form on $(M_1,g_1)$.
Then
\begin{align*}
\lim_{t\downarrow 0} \int_{M_2}
 p^{g_1}_{\phi(Z_2)}(t,\phi(z_2)) f(z_2) \mu_{g_2}(\D z_2)
=& \lim_{t\downarrow 0} \int_{M_2}
 p^{g_1}_{Z_1}(t,\phi(z_2)) f_{1}(\phi(z_2)) \mu_{g_2}(\D z_2)\\
=& \lim_{t\downarrow 0} \int_{M_1}
 p^{g_1}_{Z_{1}}(t,z_{1}) f_{1}(z_{1})  \left((\phi^{-1})_*\mu_{g_2}\right)(\D z_{1})\\
=& \lim_{t\downarrow 0} \int_{M_1}
 p^{g_1}_{Z_{1}}(t,z_{1}) f_{1}(z_{1}) \mu_{g_1}(\D z_{1})
 =  f_{1}(Z_{1})
 = f\circ\phi(Z_2).
\end{align*}
\end{proof}
The fundamental solutions in Proposition~\ref{RelationFundamentalSol} 
are denoted with respect to the Riemannian volume form of the respective manifold,
whereas they are expressed in terms of the Lebesgue measure (the volume form on the Euclidean plane) in Lemma~\ref{lem:KgKu} 
and Lemma~\ref{LemmaKernelRelation}.
\label{page:FundamSol}This translation can be performed as follows:
let the Riemannian volume form be given in orthogonal coordinates, 
and let $K^g_Z$ denote the fundamental solutions (in terms of the Lebesgue measure) 
at $Z\in \Hh_+$ of the heat equation corresponding to the Riemannian metric~$g$ 
in the sense that the Radon-Nikodym derivative with respect to the Lebesgue measure
is already incorporated into the expression for~$K^g_Z$:
if $p_{Z}^g(s,\cdot)$ denotes the fundamental solution (at $Z\in \Hh_+$) as in 
Proposition~\ref{RelationFundamentalSol}, then, for any test function~$f$, 
$$
 \int_{\Hh_+}  f(z) K_{Z}^g(s,z) \D z                                     
 := \int_{\Hh_+}  f(z) p^g_{Z}(s,z)\frac{\D z}{\mu_g(\D z)} \mu_g(\D z) 
 = \int_{\Hh_+}  f(z) p^g_{Z}(s,z) \frac{\mu_g(\D z)}{\sqrt{\det(g)}}.
$$
The following lemma follows directly from Proposition~\ref{RelationFundamentalSol}.
In order to translate the coordinate-free result of Proposition~\ref{RelationFundamentalSol} 
to our setting, we assume from now on that $M_1=M_2 = \Hh_+$.
\begin{lemma}\label{LemmaKernelRelation}
For any $i=1,2$, let $K^{g_i}_{Z_i}$ denotes the fundamental solution at $Z_i\in \Hh_+$of the heat equations corresponding to the metric $g_i$:
\begin{equation}\label{HeatEquations}
\left\{
\begin{array}{ll}
 & \displaystyle \partial_{s} K^{g_i}_{Z_i} = \frac{1}{2} \Delta_{g_i} K^{g_i}_{Z_i},\\
 & K^{g_i}_{Z_i}(0,z_i)=\delta(z_i - Z_i).
\end{array}
\right.
\end{equation}
If~$\phi:(\Hh_+, g_2) \to (\Hh_+, g_1)$ is an isometry such that $\phi(Z_2) = Z_1$ and $\phi(z_2) = z_1$, then 
\begin{align}\label{KernelRelation}
K_{Z_1}^{g_1}(s,z_1) = \det\left(\nabla \phi(Z_2)\right) K_{\phi(Z_2)}^{g_2}(s,\phi(z_2)).
\end{align}
\end{lemma}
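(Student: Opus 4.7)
The plan is to reduce Lemma~\ref{LemmaKernelRelation} to Proposition~\ref{RelationFundamentalSol} by translating between the intrinsic fundamental solution $p^{g_i}_{Z_i}$ (a density with respect to the Riemannian volume form $\mu_{g_i}$) and the extrinsic kernel $K^{g_i}_{Z_i}$ (a density with respect to Lebesgue on $\Hh_+$). The two driving inputs are the isometry invariance of $p$ already established in Proposition~\ref{RelationFundamentalSol}, and the local identity $\mu_{g_i}(\D z) = \sqrt{\det g_i(z)}\,\D z$ relating the two reference measures in the orthogonal coordinates on $\Hh_+$.

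First I would apply Proposition~\ref{RelationFundamentalSol} directly: since $\phi$ is an isometry with $\phi(Z_2) = Z_1$, it yields the coordinate-free identity
$$
p^{g_2}_{Z_2}(s, z_2) \;=\; p^{g_1}_{\phi(Z_2)}\bigl(s, \phi(z_2)\bigr),\qquad s>0,\ z_2\in \Hh_+,
$$
for the two Riemannian fundamental solutions. Next I would invoke the Radon--Nikodym conversion spelled out in the paragraph immediately preceding the lemma, namely $K^{g_i}_{Z_i}(s, z_i) = p^{g_i}_{Z_i}(s, z_i)\sqrt{\det g_i(z_i)}$, to rephrase the above identity in terms of the Lebesgue kernels.

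The remaining ingredient is to relate $\sqrt{\det g_2(z_2)}$ and $\sqrt{\det g_1(\phi(z_2))}$. The defining property $\phi^{*} g_1 = g_2$ of an isometry reads in local coordinates as $(g_2)_{jk}(z_2) = \sum_{l,m}\partial_j\phi^l(z_2)\,\partial_k\phi^m(z_2)\,(g_1)_{lm}(\phi(z_2))$; taking determinants produces
$$
\sqrt{\det g_2(z_2)} \;=\; |\det\nabla\phi(z_2)|\,\sqrt{\det g_1(\phi(z_2))}.
$$
Substituting this into the two Radon--Nikodym identities and cancelling the common factor $p^{g_1}_{\phi(Z_2)}(s,\phi(z_2))\sqrt{\det g_1(\phi(z_2))}$ leaves precisely the relation~\eqref{KernelRelation}.

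The main delicate point I anticipate is bookkeeping rather than analysis: I have to track whether the Jacobian factor is evaluated at the starting point $Z_2$ or the running point $z_2$, exploit the orientation-preserving character of the maps in~\eqref{eq:Isometries} on the open quadrant $\Hh_+$ to drop the absolute value on $\det\nabla\phi$, and confirm that the heat-equation normalisation (the factor $\tfrac12$ in~\eqref{HeatEquations} versus the convention used in Definition~\ref{def:ManifoldFundamenalSol}) is handled consistently, so that uniqueness of the fundamental solution underlying Proposition~\ref{RelationFundamentalSol} applies to both kernels in the same form. Beyond this accounting, the statement is an immediate corollary of Proposition~\ref{RelationFundamentalSol} together with the standard transformation law for Riemannian volume forms under isometries.
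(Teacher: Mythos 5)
Your proposal is correct and is essentially the paper's own argument: the lemma is obtained exactly by combining Proposition~\ref{RelationFundamentalSol} with the Lebesgue/Riemannian-volume conversion $K^{g}_{Z}=p^{g}_{Z}\sqrt{\det g}$ described in the paragraph preceding the statement, plus the transformation rule $\sqrt{\det g_2(z_2)}=|\det\nabla\phi(z_2)|\sqrt{\det g_1(\phi(z_2))}$ for isometries. Your bookkeeping concern resolves in favour of the Jacobian evaluated at the \emph{running} point $z_2$ (consistent with the use of the lemma in Lemma~\ref{lem:KgKu}, where $\det\nabla\bar{\phi}_0$ is taken at $z=(x,y)$), so the factor $\det(\nabla\phi(Z_2))$ in~\eqref{KernelRelation} should be read as $\det(\nabla\phi(z_2))$ -- a slip in the statement, not a gap in your proof.
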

The generators of the Brownian motions on $(\Ss,g)$ 
(resp. $(\UU,u)$)--(defined in Section~\ref{sec:Mappings})--are defined on their respective spaces 
with $\{x\neq0\}$ and $\{\bar{x}\neq0\}$ for $\beta\neq0$
respectively and read
\begin{equation}\label{eq:LaplaceBeltrSabr}
\begin{array}{rll}
 \Delta_g f & = 
\displaystyle y^2 \left(\beta x^{2\beta-1} \frac{\partial f}{\partial x} +  
   x^{2\beta} \frac{\partial^2 f}{\partial x^2} + 2 \rho x^{\beta} \frac{\partial}{\partial x}\frac{\partial f}{\partial y}
  + \frac{\partial^2 f}{\partial y^2} \right), & \text{for any }f \in C^{k+2}(\Ss),\\
 \Delta_u f & = 
\displaystyle \bar{y}^2 \left(\beta\bar{x}^{2\beta-1} \frac{\partial f}{\partial \bar{x}} +  
   \bar{x}^{2\beta} \frac{\partial^2 f}{\partial \bar{x}^2} + \frac{\partial^2 f}{\partial \bar{y}^2} \right),
    & \text{for any }f \in C^{k+2}(\UU),
\end{array}
\end{equation} 
while the infinitesimal generators of the original SABR model~\eqref{eq:SABRSDE} are
\begin{equation}\label{eq:GeneratorSabr}
\begin{array}{rll}
\mathcal{A}f & = 
\displaystyle y^2 \left(  
   x^{2\beta} \frac{\partial^2 f}{\partial x^2}+2 \rho x^{\beta} \frac{\partial}{\partial x}\frac{\partial f}{\partial y} + \frac{\partial^2 f}{\partial \bar{y}^2} \right),
    & \text{for any }f \in C^{k+2}(\Ss),\\
\mathcal{A}_{\rho=0}f & = 
\displaystyle \bar{y}^2 \left(  
   \bar{x}^{2\beta} \frac{\partial^2 f}{\partial \bar{x}^2} + \frac{\partial^2 f}{\partial \bar{y}^2} \right),
    & \text{for any }f \in C^{k+2}(\UU),
\end{array}
\end{equation} 
Note that for $\beta=0$ the operators $\Delta_g$ and $\mathcal{A}$ (resp. $\Delta_u$ and $\mathcal{A}_{\rho=0}$) coincide.

\end{document}